\documentclass{amsart}
\usepackage[T1]{fontenc}

\usepackage{amsmath}
\usepackage{amsthm}

\usepackage{amssymb}
\usepackage{amsfonts}
\usepackage{mathrsfs}
\usepackage{mathtools}
\usepackage[utf8]{inputenc}
\usepackage{enumerate}

\usepackage{natbib}
\usepackage{paralist}
\usepackage{url}
\usepackage{color}
\usepackage{framed}
\usepackage{graphicx}
\usepackage{bm}
\usepackage{comment}
\usepackage{lipsum}  % Dummytext
\usepackage{xargs}   % Use more than one optional parameter in a new command
\usepackage{xcolor}  % Coloured text etc.
\usepackage{float}
\usepackage[all,pdf]{xy}
\usepackage[ruled,vlined]{algorithm2e}
\usepackage{graphicx}
\usepackage{enumitem}
\setlist[enumerate,1]{label=(\arabic*)}       % 一级列表：1
\setlist[enumerate,2]{label=(\alph*)}        % 二级列表：a.
\setlist[enumerate,3]{label=(\roman*)} 
\setlist[enumerate]{labelindent=\parindent}

\newtheorem{theorem}{Theorem}
\newtheorem{lemma}{Lemma}

\newtheorem{proposition}{Proposition}

\newtheorem{notation}{Notation}
\newtheorem{example}{Example}
\newtheorem{remark}{Remark}

\newtheorem{property}{Property}

\newcommand{\alg}{\operatorname{alg}}

\newcommand{\Ass}{\operatorname{Ass}}

\newcommand{\diag}{\operatorname{diag}}

\newcommand{\Frac}{\operatorname{Frac}}

\newcommand{\Gal}{\operatorname{Gal}}

\newcommand{\Id}{\operatorname{Id}}

\newcommand{\Mat}{\operatorname{Mat}}

\renewcommand{\mod}{\operatorname{mod}}

\newcommand{\rank}{\operatorname{rank}}

\renewcommand{\span}{\operatorname{span}}
\newcommand{\Spec}{\operatorname{Spec}}

\newcommand{\CC}{\mathbb{C}}

\newcommand{\FF}{\mathbb{F}}

\newcommand{\QQ}{\mathbb{Q}}

\newcommand{\UU}{\mathbb{U}}

\newcommand{\ZZ}{\mathbb{Z}}
\newcommand{\Ocal}{\mathcal{O}}

\newcommand{\Afrak}{\mathfrak{A}}
\newcommand{\Bfrak}{\mathfrak{B}}

\newcommand{\Qfrak}{\mathfrak{Q}}

\newcommand{\mfrak}{\mathfrak{m}}
\newcommand{\nfrak}{\mathfrak{n}}

\newcommand{\pfrak}{\mathfrak{p}}
\newcommand{\qfrak}{\mathfrak{q}}

\newcommand{\Cbf}{\mathbf{C}}

\newcommand{\Ibf}{\mathbf{I}}

\newcommand{\Rbf}{\mathbf{R}}

\newcommand{\abf}{\mathbf{a}}
\newcommand{\bbf}{\mathbf{b}}

\newcommand{\wbf}{\mathbf{w}}
\newcommand{\xbf}{\mathbf{x}}
\newcommand{\ybf}{\mathbf{y}}
\newcommand{\zbf}{\mathbf{z}}
\newcommand{\drm}{\mathrm{d}}

\newcommand{\<}{\langle}
\renewcommand{\>}{\rangle}

\newcommand{\normof}[1]{\left\lVert#1\right\rVert}

\newcommand{\ideal}[1]{\left\langle#1\right\rangle}
\newcommand{\inner}[2]{\left\langle#1,\,#2\right\rangle}

\title{An Algorithm for Diagonalizing Matrices of Formal Power Series  }

\author{Zihao Dai}
\address[Zihao Dai]{State Key Laboratory of Mathematical Sciences, Academy of Mathematics and Systems Science, Universityof Chinese Academy of Sciences, Beijing 100190, China}
\email{daizihao@amss.ac.cn}

\author{Hao Liang}
\address[Hao Liang]{State Key Laboratory of Mathematical Sciences, Academy of Mathematics and Systems Science, University of Chinese Academy of Sciences, Beijing 100190, China}
\email{lianghao2020@amss.ac.cn}

\author{Jingyu Lu}
\address[Jingyu Lu]{University of Waterloo, Waterloo N2L3G1, ON, Canada}
\email{jingyu.lu1@uwaterloo.ca}

\author{Lihong Zhi}
\address[Lihong Zhi]{State Key Laboratory of Mathematical Sciences, Academy of Mathematics and Systems Science, University of Chinese Academy of Sciences, Beijing 100190, China}
\email{lzhi@mmrc.iss.ac.cn}

\begin{document}

\begin{abstract}

	This paper studies the unitary diagonalization of matrices over formal power series rings. Our main result shows that a normal matrix is unitarily diagonalizable if and only if its minimal polynomial completely splits over the ring and the associated spectral projections have entries in the ring. Building on this characterization, we develop an algorithm for deciding the unitary diagonalizability of matrices over regular local rings of algebraic varieties. A central ingredient of the algorithm is a decision procedure for determining whether a polynomial splits over a formal power series ring; we establish this using techniques from prime decomposition and the relative smoothness of integral closures in ramification theory.

\end{abstract}

\maketitle

\section{Introduction}\label{sec:intro}
  
    We investigate the diagonalization problem for matrices with entries in a ring of multiparameter formal power series. Denote by 
    \[\Cbf \coloneq   \mathbb{C}[\![x_1, \dots, x_m]\!]\] 
    the ring of formal power series in $m$ variables over the complex numbers. We equip  \(\Cbf\)   with an  \emph{involution} \(*\)
    that fixes the variables and acts by a complex conjugation of the coefficients, namely $c^*  \coloneqq\bar{c}$ for $  c\in\CC, x_i^* \coloneqq x_i$, for $i=1,\dots,m. 
	$
	%\begin{eqnarray*}
	 % &&c^*  \coloneqq\bar{c},\quad\ \  \text{for} \, c\in\CC, \\
   % &&x_i^* \coloneqq x_i,\quad \text{for} \, i=1,\dots,m  
	%\end{eqnarray*}
    This involution extends naturally to the matrix algebra $\Mat_n(\Cbf)$. For a matrix
    $A = (a_{ij})_{1 \leqslant i,j \leqslant n} \in \Mat_n(C),$
    we define its adjoint by
    $A^* \coloneq   (a_{ji}^*)_{1 \leqslant i,j \leqslant n}.$
    A matrix $A\in \Mat_n(\Cbf) $  is called \emph{normal} if it commutes with its adjoint, $AA^* = A^*A.$
    A matrix $U \in \Mat_n(\Cbf)$ is \emph{unitary} if 
    $U U^* = U^* U= I_n, $
    where $I_n$ denotes the identity matrix. We denote by $\UU_n(\Cbf)$ the group of unitary matrices in $\Mat_n(\Cbf)$.
    
    This paper studies the unitary diagonalization of normal matrices over formal power series rings. A matrix $A \in \Mat_n(\Cbf)$ is \emph{unitarily diagonalizable} if there exist a unitary matrix $U$ and a diagonal matrix $D$ such that
$A = U D U^*.$
Diagonalization problems for matrices over formal power series rings arise naturally in classical matrix theory—where the spectral theorem guaranties that every normal matrix over $\mathbb{C}$ is unitarily diagonalizable—and in quantum mechanics, particularly in perturbation theory, where operators are expanded as formal power series. The classic perturbation method computes the diagonalization of a matrix $A$ under the assumption that the eigenspaces of the leading term are nondegenerate. In the univariate case ($m=1$), degenerate perturbation theory yields unitary diagonalization without this nondegeneracy assumption~\cite{griffiths_introduction_2018}.
Nevertheless, rigorous results regarding the diagonalization of multiparameter formal power series matrices remain largely open. A major obstacle is that the eigenvalues of such normal matrices cannot generally be represented as formal power series; they often require Puiseux series expansions or more general algebraic extensions.

    Our work is motivated by  Parusi\'nski and Rond’s study of multiparameter perturbation theory~\cite{MR4069237}, where a sufficient condition for diagonalizability of formal power series matrices is provided using the Abhyankar-Jung theorem~\cite{MR2928451}. Their criterion requires the discriminant of the minimal polynomial to be a normal crossing divisor -- that is, a monomial times a unit -- under which diagonalizability follows. In particular, their result implies that every normal matrix over a univariate formal power series ring is diagonalizable. However, the normal crossing condition excludes some matrices that are evidently diagonalizable, as the following elementary example illustrates.
 
    \begin{example}\label{ex1}
        The discriminant of the minimal polynomial of the  diagonal matrix 
        \[A=\left(\begin{array}{cc}
        x_1 & 0\\
        0 & x_2
        \end{array}\right)\]
        is $(x_1-x_2)^2$, which is not a normal crossing divisor.
    \end{example}
 
 % This limitation motivates the search for more general conditions for diagonalizability.

%From an algorithmic standpoint, determining whether a matrix with entries in a multiparameter formal power series ring admits a diagonalization by a finite-step procedure is notoriously difficult \cite{MR4890428}. This difficulty stems from the central role of factorization over such rings: in full generality, factorization relies on techniques such as Puiseux expansions and Hensel lifting, which typically produce only truncated approximations rather than exact results. For this reason, our computational approach focuses on simpler yet practically meaningful settings. 

   % This limitation underscores the need for more general conditions with respect to diagonalizability. 

This limitation suggests that exploring more general conditions for diagonalizability could be beneficial.

%In particular, when the entries of the matrix lie in the regular local ring of an affine algebraic variety at a closed regular point, our algorithm efficiently determines whether the matrix is unitarily diagonalizable. We expect that the ideas developed here can be extended to broader classes of formal power series, including (d)-finite series.

%In this work, we make two main contributions.
%\begin{itemize}
%\item First, we provide a complete characterization of unitary diagonalizability for matrices over an arbitrary formal power series ring. This is formalized in Theorem \ref{theorem 1}, which establishes a necessary and sufficient condition for diagonalization.

%\item Second, in the geometric–local setting where the entries of $A$ lie in the regular local ring of an affine $\mathbb{C}$-algebraic variety at a closed regular point, we develop an explicit algorithm that checks diagonalizability and, when it exists, computes a corresponding unitary diagonalization. While diagonalization is generally difficult over multiparameter formal power series rings due to factorization challenges, restricting to the regular local ring allows a practical and effective computational procedure without compromising the generality of the theoretical criterion.
%\end{itemize}

    In this work, we present two primary contributions:
    \begin{itemize}
        \item \textbf{Theoretical characterization}: We establish a necessary and sufficient condition for the unitary diagonalizability of normal matrices over arbitrary formal power series rings. 
        This result, formalized in Theorem~\ref{theorem1}, extends the classical spectral theorem to the multiparameter setting, providing a rigorous algebraic foundation applicable to matrices dependent on an arbitrary finite number of parameters.

        \item \textbf{Algorithmic construction}: We develop a constructive algorithm for  matrices with entries in the regular local ring of an affine $\mathbb{C}$-variety at a closed regular point. A key ingredient is a computable criterion (Theorem~\ref{Thm:PowerSeriesCriteria4Jacobian}) that bypasses the intractability of factorization in formal power series rings: it characterizes when the minimal polynomial splits completely in the local ring via a full-rank condition on a specific Jacobian matrix. This reduces the existence of a unitary decomposition to a simple rank test and enables its explicit computation without resorting to root-finding in power series rings.
    \end{itemize}

\section{Criterion for Unitary Diagonalization over $\Cbf$}\hfill\label{sec:criterion}

    In this section, we establish a necessary and sufficient criterion for unitary diagonalization over $\Cbf$. Let 
    $\Rbf:=\mathbb{R}[\![x_1, \dots, x_m]\!]$
    be the  ring of formal power series in the $m$ variables in the real number field, which is the subring of the involution invariant subring of $\Cbf=\mathbb{C}[\![x_1, \dots, x_m]\!]$. The power series rings $\Cbf$ and $\Rbf$ are regular local rings with maximal ideals $\mfrak=\mfrak_{\Cbf}$ and $\mfrak_{\Rbf}$ both generated by the common regular local real parameters $\xbf=(x_1, \dots, x_m)$, respectively. The groups of units in $\Cbf$ and $\Rbf$ are, respectively, $\Cbf^{\times}\coloneqq \Cbf\setminus \mfrak$ and $\Rbf^{\times}\coloneqq \Rbf\setminus \mfrak_{\Rbf}$. We equip $\Cbf^n$ with the standard Hermitian inner product: 
    \[\inner{-}{-}:\Cbf^n \times \Cbf^n \to \Cbf,\quad (\abf,\bbf) \mapsto \sum a_kb_k^*.\]
    For a vector $\abf\in \Cbf^n$, the inner product $\inner{\abf}{\abf}$ always lies in $\Rbf$. If $\inner{\abf}{\abf}$ is invertible in $\Rbf$ (i.e. invertible in $\Cbf$), we then define the $\Cbf$-norm of $\abf$ by $\normof{\abf}\coloneqq \sqrt{\inner{\abf}{\abf}} \in \Rbf$, normalized so that $\normof{\abf}(0)= \normof{\abf}\mod \mfrak > 0$.
Note that if $\inner{\abf}{\abf}$ is not invertible in $\Rbf$ (for example, if $\abf=[x_1,x_2]^{\top}$), then $\sqrt{\inner{\abf}{\abf}}$ need not be an element of $\Rbf$ or $\Cbf$.  
   % Note that if $\inner{\abf}{\abf}$ is not invertible in $\Rbf$, for instance $\abf=[x_1,x_2]^{\top}$, then $\sqrt{\inner{\abf}{\abf}}$ is neither necessarily in $\Rbf$ nor in $\Cbf$.
  
    The proof of Theorem \ref{theorem1} is inspired by the classic perturbation theory, which in essence is a Hensel lifting procedure~\cite{Hensel1904}. Hensel lifting is a powerful algorithmic tool for solving equations over a formal power series ring, yielding a degree-by-degree approximation of matrix diagonalization under the non-degeneracy assumption, thereby simultaneously establishing both existence and computability. In proving Theorem \ref{theorem1}, we refine this approach to remove the non-degeneracy hypothesis. Furthermore, our main result in Section \ref{sec:split} employs a more sophisticated variant (Henselization). Since Henselian ideas remain central throughout, we first review the basic setup.
    %We will use Hensel lifting in the proof of Theorem \ref{theorem 1}, which is a refinement of this simple idea and works without assuming non-degenerateness.  Moreover, our main theorem in Section \ref{sec:split} will take advantage of a more sophisticated technique (Henselization). One will quickly notice the relic of the principles of Hensel lifting in the proof. So before we enter the details of Theorem \ref{theorem 1}, it is worth presenting a rather brief review of this approach. 

    Consider the case $m = 1$ and let $A = A_0 + A_1 x_1$ be with $A_0, A_1 \in \operatorname{Mat}_n(\mathbb{C})$. Assume that the characteristic polynomial $\chi_{A_0}(t)$ has $n$ distinct roots (the non‑degenerate case). For $\ybf = (y_1, \dots, y_n)$, let \(e_k(\ybf)\) denote the $k$-th elementary symmetric polynomial. Define the polynomial map
    %Say $m=1$ and $A = A_0 + A_1 x_1$ and the characteristic polynomial $\chi_{A_0}(t)$ has $n$ different roots (the non-degenerate assumption), where $A_0, A_1\in \Mat_n(\CC)$. Let $e_k(y)\coloneq \sum_{1\leq j_1\leq\ldots \leq j_k \leq n}\prod_{l=1}^k y_{j_l}$ be the $k$-th elementary polynomial of $y_1,\ldots y_n$. Consider the polynomial map induced by $(e_{k})_{k=1}^{n}$:
    \begin{equation*}
        f: \Cbf^n \to \Cbf^n, \quad \ybf=(y_1,\ldots,y_n) \mapsto (-e_1(\ybf),\ldots,(-1)^ne_n(\ybf)).
    \end{equation*}
    Because $A_0$ has no multiple eigenvalues, the tangent map $\drm f$ has full rank at the origin $o\in\Cbf^n$. The Implicit Function Theorem (IFT) for the formal power series ring then ensures that $\chi_A(t)\in\Cbf[t]$ splits completely over $\Cbf$. Starting from the normalized eigenvector $\bar{v}\in\CC^n$ of $A_0$ associated with the eigenvalue $\lambda\in \CC$, the Newton-Hensel iteration lifts $\bar{v}$ degree-wise to a normalized eigenvector $v\in\Cbf^n$ of $A$, associated with $\lambda\in\Cbf$ with $\lambda\equiv\bar{\lambda}\mod \mfrak_\Cbf$.
    
    If $\chi_{A_0}(t)$ has multiple roots (degenerate cases), $\rank(\drm f|_o) < n$, then IFT fails and the Newton-Hensel iteration algorithm is ill-defined, since it requires inverting $\drm f|_o$. More refined techniques are needed to address such degeneracies~\cite{MR1907381,sasaki2000,MR3663235}. 

\begin{comment} 
{\color{red}
    Since $A_0$ has no multiple roots, the differential $\drm f$ is of full rank at the origin $o \in \Cbf^n$. The Inverse Function Theorem (IFT) for the formal power series ring $\Cbf$ then guarantees that the characteristic polynomial $\chi_A(t) \in \Cbf[t]$ splits over $\Cbf$.

Starting from a normalized eigenvector $v \in \CC^n$ of $A_0$ associated with an eigenvalue $\lambda \in \CC$, the Newton--Hensel iteration lifts $v$ degree by degree to a normalized eigenvector $v' \in \Cbf^n$ of $A$, associated with an eigenvalue $\lambda' \in \Cbf$ satisfying
\[
\lambda' \equiv \lambda \pmod{\langle x_1 \rangle}.
\]

By contrast, if $\chi_A(t)$ has multiple roots (thus violating the non-degeneracy assumption), then
\[
\rank(\drm f|_o) < n.
\]
In this case, the IFT no longer applies, and the Newton--Hensel iteration becomes ill-defined, since it requires inversion of $\drm f|_o$. Dedicated techniques are therefore required to handle the degenerate cases arising in Hensel iteration \cite{MR1907381,sasaki2000,MR3663235}.}
\end{comment}
    
    Given a normal matrix $A\in \Mat_n(\Cbf)$, suppose $A$ has $d$ distinct eigenvalues $\lambda_k\in\FF\coloneqq\overline{\Frac(\Cbf)}^{\alg}$ with multiplicity $n_k$, $k = 1,\ldots,d$ and $\sum n_k = n$.  The following theorem characterizes when $A$ is unitarily diagonalizable.

    \begin{theorem}\label{theorem1}
        A normal matrix $A$ is diagonalizable under the adjoint action of the unitary group $\UU_n(\Cbf)$ if and only if the characteristic polynomial of $A$ totally splits over $\Cbf$ and all the matrix entries of the projection operators onto the eigen-spaces of $A$ belong to $\Cbf$, i.e. there exists  $ \Pi_k\in \Mat_n(\Cbf)$, such that 
        \begin{equation}\label{eq: orth decomp}
            \Pi_{k}^{*} = \Pi_{k}, ~ \Pi_{k}^2 = \Pi_{k}, ~ \sum_k \Pi_{k} = I_n, ~ \Pi_{k}\Pi_{k'}=0 ~ \text{if} ~ k\neq k',
        \end{equation}
        and $A = \sum_k\lambda_k \Pi_{k}$.
    \end{theorem}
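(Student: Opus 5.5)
The forward direction is a direct computation; the converse reduces to producing an orthonormal $\Cbf$-basis of each image $\Pi_k\Cbf^n$.

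\emph{Necessity.} Suppose $A=UDU^*$ with $U\in\UU_n(\Cbf)$ and $D=\diag(d_1,\dots,d_n)$, $d_i\in\Cbf$. Then $\chi_A(t)=\prod_i(t-d_i)$, so the characteristic polynomial splits over $\Cbf$. Moreover the distinct eigenvalues $\lambda_1,\dots,\lambda_d\in\FF$ are exactly the distinct values among the $d_i$, hence lie in $\Cbf$. For each $k$, let $E_k$ be the diagonal $0/1$ matrix selecting the indices $i$ with $d_i=\lambda_k$, and set $\Pi_k\coloneqq UE_kU^*\in\Mat_n(\Cbf)$. The relations \eqref{eq: orth decomp} follow immediately from $E_k^*=E_k=E_k^2$, $\sum_kE_k=I_n$, $E_kE_{k'}=0$ for $k\ne k'$, and $UU^*=I_n$. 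Also $A=\sum_k\lambda_k\Pi_k$ because $D=\sum_k\lambda_kE_k$. These $\Pi_k$ are the projections onto the eigenspaces: they are the orthogonal spectral projections of $A$ over $\FF$, which are unique.

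\emph{Sufficiency, reduction.} Assume the $\lambda_k\in\Cbf$ and the $\Pi_k\in\Mat_n(\Cbf)$ satisfy \eqref{eq: orth decomp} with $A=\sum\lambda_k\Pi_k$. Each $M_k\coloneqq\Pi_k\Cbf^n$ is a direct summand of $\Cbf^n$, since $\Cbf^n=\bigoplus_kM_k$ via the idempotents. It is therefore a finitely generated projective module over the local ring $\Cbf$, hence free. Its rank is $\rank\Pi_k=n_k$, computed after tensoring with $\FF$. The summands are mutually orthogonal: for $a,b\in\Cbf^n$,
\[\inner{\Pi_ka}{\Pi_{k'}b}=b^*\Pi_{k'}^*\Pi_ka=b^*\Pi_{k'}\Pi_ka=0 \quad\text{for } k\neq k'.\]
It thus suffices to find an orthonormal $\Cbf$-basis of each $M_k$. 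Concatenating these bases gives a unitary $U$ whose $k$-th block of columns $U_k$ satisfies $AU_k=\lambda_kU_k$, so $A=UDU^*$ with $D=\diag(\lambda_1I_{n_1},\dots,\lambda_dI_{n_d})$.

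\emph{Main step: Gram--Schmidt over $\Cbf$.} The main obstacle is that $\normof{v}$ is defined only when $\inner{v}{v}$ is a unit, so I will run Gram--Schmidt on a basis chosen so that this never fails. Reduce modulo $\mfrak$: the matrix $\bar\Pi_k=\Pi_k\bmod\mfrak$ is a Hermitian idempotent over $\CC$ of rank $n_k$. Rank is preserved because $\Pi_k$ is idempotent, so its trace $n_k$ is an integer constant and reduces to the trace of $\bar\Pi_k$. Choose $a_1,\dots,a_{n_k}\in\CC^n$ with $\bar\Pi_ka_j$ orthonormal in $\CC^n$, and put $w_j=\Pi_ka_j\in M_k$. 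The Gram matrix $G=(\inner{w_i}{w_j})$ then satisfies $G\equiv I\bmod\mfrak$, so $G$ is invertible and all its leading principal minors are units. Consequently each step of Gram--Schmidt,
\[u_j'=w_j-\sum_{i<j}\inner{w_j}{u_i}u_i,\qquad u_j=u_j'/\normof{u_j'},\]
has $\inner{u_j'}{u_j'}\equiv1\bmod\mfrak$, a unit in $\Rbf$. Its square root $\normof{u_j'}\in\Rbf$ exists by the binomial series or Hensel's lemma, since $\sqrt{1+z}$ converges formally for $z\in\mfrak_\Rbf$. The resulting $u_j$ are orthonormal and lie in $M_k$. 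By Nakayama, the $u_j$ generate $M_k$, since their reductions span $\bar\Pi_k\CC^n$, which is $M_k/\mfrak M_k$. Hence they form an orthonormal basis of $M_k$, which completes the proof.
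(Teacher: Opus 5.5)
Your proposal is correct and follows essentially the same route as the paper: lift a $\CC$-orthonormal basis of $\Pi_k(0)\CC^n$ through $\Pi_k$, then run Gram--Schmidt over $\Cbf$, where every norm is a unit because it is $\equiv 1 \bmod \mfrak$. You also make explicit two points the paper passes over quickly: the Nakayama step showing the output spans all of $\Pi_k\Cbf^n$ rather than just its own span, and the trace argument showing that reduction mod $\mfrak$ preserves the rank.
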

    \begin{proof}

        Suppose that $A$ is diagonalizable, i.e., there are a unitary matrix $U\in \UU_n(\Cbf)$ and a diagonal matrix $D\in\Mat_n(\Cbf)$ such that $A = UDU^{-1}$. The diagonal entries of $D$ are the eigenvalues of $A$, so the characteristic polynomial of $A$, $\chi_A\in \Cbf[t]$ splits over $\Cbf$. Up to a permutation of the coordinate, we write $D = \diag(\lambda_1I_{n_1},\ldots,\lambda_dI_{n_d})$, where $n_k$ is the multiplicity of the eigenvalue $\lambda_k$, and $\sum_k{n_k}=n$. Set 
        \[\Pi_{k}\coloneqq U\diag(0,\ldots,0,I_{n_k},0,\ldots,0)U^{-1}\in\Mat_n(\Cbf).\] 
        Straightforward verification shows that (\ref{eq: orth decomp}) is satisfied. Here $\Pi_{k}$ is the orthogonal projection operator onto the eigenspace $V_{k}$ associated with the eigenvalue $\lambda_k$. 

        Conversely, suppose that $A = \sum_k\lambda_k \Pi_{k}$ with $\Pi_{k}$ the projection operator onto the eigen-value $\lambda_k$'s eigen-space $V_k$ which satisfies (\ref{eq: orth decomp}), and $\Pi_{k}\Pi_{k'}=0$ when $k\neq k'$. The property of the orthogonal projection operators $\Pi_{k}\in\Mat_n(\Cbf)$ implies $\Cbf^n=\bigoplus_k (\Pi_{k}\Cbf^n)$, and each $V_k=\Pi_{k}\Cbf^n$ is a finite projective $\Cbf$-module. Since $\Cbf$ is a local ring, one deduces that every $V_k$ is a free $\Cbf$-module. We need to extract an orthonormal $\Cbf$-basis of $V_k$ from $\Pi_{k}$. Denote $\Pi_{k}(0)\in\Mat_n(\CC)$ as the remainder of $\Pi_{k} \mod \mfrak$.  Then we have 
        \begin{equation*}
            \Pi_{k}^{*}(0) = \Pi_{k}(0), \Pi_{k}(0)^2 = \Pi_{k}(0),\sum_k \Pi_{k}(0) = I_n,\Pi_{k}(0)\Pi_{k'}(0)=0
        \end{equation*} when $k\neq k'$. This is an orthogonal decomposition of $\CC^n$. We recover the Gram-Schmidt orthogonalization over $\Cbf$ from the classic one over $\CC$. Applying Gram-Schmidt orthogonalization over $\CC$, we obtain an orthonormal $\CC$-basis of the image of $\Pi_{k}(0)$ in $\CC^n$, say $e_{k,1},\ldots,e_{k,n_{k}}\in \CC^{n_k}$.  We can write $e_{k,j} = \Pi_{k}(0)c_{k,j}$ with $c_{k,j}\in \CC^n$ and $\normof{c_{k,j}} > 0$. 
        %  Since  \[\inner{\Pi_{k}c_{k,j}}{\Pi_{k}c_{k,j}}\equiv \normof{e_{k,j}}^2 =1\mod \mfrak,\]
      %   by Hensel lifting over $\Cbf$, $\inner{\Pi_{k}c_{k,j}}{\Pi_{k}c_{k,j}}$ is an invertible square in $\Cbf$, which means we can find $w_{k,j}\in \Cbf$ such that $w_{k,j}^2 = \inner{\Pi_{k}c_{k,j}}{\Pi_{k}c_{k,j}}$ and $w_{k,j}(0) = 1$. 
      
Since
\[
    \inner{\Pi_{k}c_{k,j}}{\Pi_{k}c_{k,j}} \equiv \normof{e_{k,j}}^2 = 1\  (\mod\mfrak),
\]
this inner product is a unit in $\Rbf$. By Hensel's Lemma, there exists a unique element $w_{k,j} \in \Rbf^{\times}$ such that 
\[w_{k,j}^2 = \inner{\Pi_{k}c_{k,j}}{\Pi_{k}c_{k,j}}, ~w_{k,j}(0) = 1.\]
 
 Set $E_{k,j}\coloneqq\frac{\Pi_{k}c_{k,j}}{w_{k,j}}$. $E_{k,j}\in \Cbf^n$ as $w_{k,j}\notin\mfrak$ is invertible in $\Cbf$. The definition of $E_{k,j}$ ensures
        \begin{equation}\label{eq: orthonormal mod m}
            \normof{E_{k,j}} = 1,\quad \inner{E_{k,j}}{E_{k,j'}} \equiv \delta_{j,j'}\ (\mod \mfrak),
        \end{equation} 
 where $\delta_{j,j'}$ denotes the Kronecker delta, equal to $1$ if $j=j'$ and $0$ otherwise.      
 %Equation (\ref{eq: orthonormal mod m}) implies that $\span_{\Cbf}(E_{k,j'})_{j'\leq j}$ is a free $\Cbf$-module for any $j = 1,\ldots,n_k$, since $\Cbf$ is an integral local ring.
 Equation (\ref{eq: orthonormal mod m}) implies that $\operatorname{span}_{\Cbf}(E_{k,j'})_{j'\leqslant j}$ is a free $\Cbf$-module for any $j = 1,\ldots,n_k$, utilizing the fact that $\Cbf$ is a local ring.
 
 Set $\tilde{e}_{k,1}\coloneqq E_{k,1}$. Define $\tilde{e}_{k,j}\in \span_{\Cbf}(E_{k,j'})_{j'\leqslant j}\setminus \span_{\Cbf}(E_{k,j'})_{j'< j}$ inductively as 
        \begin{equation}\label{eq: gram-schmidt}
            \tilde{e}_{k,j}\coloneqq \frac{E_{k,j}-\sum_{j'<j}\inner{E_{k,j}}{\tilde{e}_{k,j'}}\tilde{e}_{k,j'}}{\normof{E_{k,j}-\sum_{j'<j}\inner{E_{k,j}}{\tilde{e}_{k,j'}}\tilde{e}_{k,j'}}}.
        \end{equation}
        We verify that the definition of $(\tilde{e}_{k,j'})_{j'=1}^{n_k}$ is valid and gives an orthonormal $\Cbf$-basis of the free module $\Pi_{k}\Cbf^n$ by induction. $\tilde{e}_{k,1}$ is well defined. Suppose that for $j'<j$, $(\tilde{e}_{k,j'})_{j'<j}$ is well defined and gives an orthonormal $\Cbf$-basis of the free module $\span_{\Cbf}(E_{k,j'})_{j'< j}$. Then we have 
        \[E_{k,j}-\sum_{j'<j}\inner{E_{k,j}}{\tilde{e}_{k,j'}}\tilde{e}_{k,j'} \equiv e_{k,j}\ (\mod \mfrak),\]
        which implies that 
        \[\normof{E_{k,j}-\sum_{j'<j}\inner{E_{k,j}}{\tilde{e}_{k,j'}}\tilde{e}_{k,j'}}\in \Cbf^{\times}.\]
        Hence, $\tilde{e}_{k,j}$ is well defined. By induction we see that Equation (\ref{eq: gram-schmidt}) implies that $E_{k,j}$ can be $\Cbf$-linearly expressed in terms of $(\tilde{e}_{k,j'})_{j'\leqslant j}$ and vise versa,  which ensures that $(\tilde{e}_{k,j'})_{1\leqslant j'\leqslant j}$ gives an orthonormal $\Cbf$-basis of $\span_{\Cbf}(E_{k,j'})_{j'\leqslant j}$. 

%With all $(\tilde{e}_{k,j})_{1\leq k \leq d, 1\leq j\leq n_k}$, set $U\coloneqq [\tilde{e}_{1,1},\ldots,\tilde{e}_{1,n_1},\tilde{e}_{2,1},\ldots,\tilde{e}_{k,n_k}]$, then a straightforward computation verifies $AU = U\diag(\lambda_1I_{n_1},\ldots,\allowbreak \lambda_dI_{n_d})$ as desired.

Let $U$ be the matrix with columns given by the collection $(\tilde{e}_{k,j})_{k,j}$, ordered as
\[
    U \coloneqq [\tilde{e}_{1,1}, \ldots, \tilde{e}_{1,n_1}, \ldots, \tilde{e}_{d,1}, \ldots, \tilde{e}_{d,n_d}].
\]
A straightforward computation then verifies that 
\[AU = U\diag(\lambda_1I_{n_1}, \ldots, \lambda_dI_{n_d}). \]

\end{proof}
%\begin{remark}
  %  Through the injection $\Cbf \hookrightarrow \Frac(\Cbf)$, for given normal matrix $A$, we can always find projection operators $\Pi_{k}\in \Mat_n(\Frac(\Cbf))$ which satisfy (\ref{eq: orth decomp}) when $\chi_A(t)$ splits in $\Cbf$. In fact these $(\Pi_{k})_k$ are uniquely determined by Lagrange interpolation on the $d$ distinct roots of $\chi_A(t)$. When $\Pi_{k}\in \Mat_n(\Cbf)$, these $(\Pi_{k})_k$ serve as the projection operators in the statement of Theorem \ref{theorem 1}.
  %  \end{remark}
\begin{remark}

 Using the natural injection $\Cbf \hookrightarrow \Frac(\Cbf)$, we consider the normal matrix $A$ as an element of $\Mat_n(\Frac(\Cbf))$. If the characteristic polynomial $\chi_A(t)$ splits over $\Cbf$ (equivalently, over $\Frac(\Cbf)$), we can construct projection operators
$
    \Pi_{k} \in \Mat_n(\Frac(\Cbf))
$
satisfying~\eqref{eq: orth decomp} and $A=\sum_k{\lambda_k \Pi_{k}}$. These projections, called the \emph{spectral projections} of $A$, are uniquely determined and can be computed using Lagrange interpolation on the $d$ distinct roots of $\chi_A(t)$. Furthermore, if $\Pi_{k} \in \Mat_n(\Cbf)$ for all $k$, then this family $(\Pi_{k})_k$ constitutes the spectral projections required for Theorem~\ref{theorem1}. 
\end{remark}

\begin{comment}
    \begin{remark}
 By natural injection $\Cbf \hookrightarrow \Frac(\Cbf)$, for any normal matrix $A$ whose characteristic polynomial $\chi_A(t)$ splits over $\Cbf$, one can construct projection operators 
\[
\Pi_{k} \in \Mat_n(\Frac(\Cbf))
\]
satisfying~\eqref{eq: orth decomp}. These projections are uniquely determined by Lagrange interpolation on the $d$ distinct roots of $\chi_A(t)$. 
Moreover, if, in fact, $\Pi_{k} \in \Mat_n(\Cbf)$, then the family $(\Pi_{k})_k$ provides the projection operators appearing in the statement of Theorem~\ref{theorem 1}.
\end{remark}
\end{comment}

		%One can deduce the unitary diagonalization of a normal matrix \(A\in \Mat_n(\Cbf)\) by the following steps:
The unitary diagonalization of a normal matrix $A \in \Mat_n(\Cbf)$ can be computed by following the standard steps described below.  %However, over formal power series rings, this approach is algorithmically infeasible: exact spectral decomposition requires eigenvalues in the algebraic closure (often infinite Puiseux series), precluding finite symbolic computation.

		\begin{enumerate}\upshape
			\item Compute $A$'s (monic) minimal polynomial 
            $$\mu_A(t)=\chi_A(t)/\gcd(\chi_A(t),\chi_A'(t));$$
             factor it into the product of irreducible polynomials in $\Cbf[t]$: $$\mu_A(t)=p_1(t)\cdots p_d(t).$$
             If any \(p_k(t)\) is non-linear, return \textbf{NOT Diagonalizable}.
      
            \item For  $k=1,\dots,d$, compute the projection polynomial $P_k(t)\in \Cbf[t]$ by Lagrange interpolation, which satisfies $$P_k(t)\equiv \delta_{kl} ~(\mod p_l(t)).$$

             If the denominator of any reduced entry of the projection matrix $\Pi_k=P_k(A)$ does not belong to $\Cbf^\times$, return \textbf{NOT Diagonalizable}.

            \item For  $k=1,\dots,d$, find the orthonormal $\Cbf$-basis $B_k$ of $\Pi_k\Cbf^n$ (e.g., via Gram-Schmidt). Let $n_k=\rank B_k$. Return $U=(B_1|\dots|B_d)$ and $D=\diag(\lambda_1 I_{n_1},\dots,\lambda_d I_{n_d})$. 
		\end{enumerate}

In the following, we show an example that does not 
 satisfy the sufficient condition given in \cite[Theorem 2.5]{MR4069237}  but is nevertheless diagonalizable.
 \begin{example}
        Consider the unitary diagonalization of 
        \begin{equation*}
            A=\begin{pmatrix}
                \frac{1}{2}(x_1+x_2) & -\frac{i}{2}(x_1-x_2) \\
                \frac{i}{2}(x_1-x_2) & \frac{1}{2}(x_1+x_2)
            \end{pmatrix},
        \end{equation*}
        over $\Cbf=\CC[\![x_1,x_2]\!]$. 
        The characteristic polynomial of $A$ is 
        \[\chi_A(t)=t^2-(x_1+x_2) t+x_1x_2,\]
        which coincides with  the  minimal polynomial $\mu_A(t) $. Its discriminant 
    is $(x_1-x_2)^2$, which  is not a unit of $\Cbf = \mathbb{C}[\![x_1, x_2]\!]$. Therefore,  it 
 does not satisfy the sufficient condition of \cite[Theorem 2.5]{MR4069237}.
 
The projection polynomials  corresponding to $p_1(t)=t-x_1$ and $p_2(t)=t-x_2$ are  
     \[P_1(t)=\frac{t-x_2}{x_1-x_2}, \quad P_2(t)=\frac{t-x_1}{x_2-x_1}.\]
     Compute the projection matrices 
        \begin{equation*}
            \Pi_1=P_1(A)=\begin{pmatrix}
                \frac{1}{2} & -\frac{i}{2} \\
                \frac{i}{2} & \frac{1}{2}
            \end{pmatrix},\quad
            \Pi_2=P_2(A)=\begin{pmatrix}
                \frac{1}{2} & \frac{i}{2} \\
                -\frac{i}{2} & \frac{1}{2}
            \end{pmatrix}.
        \end{equation*}
The orthonormal bases of the subspaces $\Pi_1\Cbf^2$ and $\Pi_2\Cbf^2$ are given respectively by
\[
    B_1 = \frac{1}{\sqrt{2}}\begin{pmatrix} 1 \\ i \end{pmatrix}
    \quad \text{and} \quad
    B_2 = \frac{1}{\sqrt{2}}\begin{pmatrix} i \\ 1 \end{pmatrix}.
\]
Therefore, the diagonalization of $A$
    over $\Cbf=\CC[\![x_1,x_2]\!]$  is given by 
        $A=U D U^*$, where 
        \begin{equation*}
            U=\begin{pmatrix}
                \frac{1}{\sqrt{2}} & \frac{i}{\sqrt{2}} \\
                \frac{i}{\sqrt{2}} & \frac{1}{\sqrt{2}}
            \end{pmatrix},~\text{and}~
            D=\begin{pmatrix}
                x_1 & 0 \\
                0 & x_2
            \end{pmatrix}.
        \end{equation*}
\end{example}

    \begin{example}\label{exp:adam's ctexp}
        Consider the following example in \cite[Example 6.1]{MR2372149}
        \begin{equation*}
            A=\begin{pmatrix}
                x_1^2 & x_1 x_2 \\
                x_1 x_2 & x_2^2
            \end{pmatrix}.
        \end{equation*}
        The  minimal  polynomial of $A$ is $$ \mu_A(t)=t^2-(x_1^2+x_2^2)t. $$
        %which coincides with its minimal polynomial over $\Cbf=\CC[\![x_1,x_2]\!]$. 
        Factoring $\mu_A(t)=t(t-(x_1^2+x_2^2))$ over $C$, the irreducible factors are 
        $$ p_1(t)=t, \quad p_2(t)=t-(x_1^2+x_2^2).$$ 
        The projection polynomials corresponding to $p_1(t)$ and  $p_2(t)$ are 
        \[P_1(t)=\frac{t}{x_1^2+x_2^2}, \quad P_2(t)=-\frac{t-(x_1^2+x_2^2)}{x_1^2+x_2^2}.\]
        Compute the projection matrices 
        \begin{eqnarray*}
            &&\Pi_1=P_1(A)=\frac{1}{x_1^2+x_2^2}\begin{pmatrix}
                x_1^2 & x_1 x_2\\
                x_1 x_2& x_2^2
            \end{pmatrix},\\
            &&\Pi_2=P_2(A)=\frac{1}{x_1^2+x_2^2}\begin{pmatrix}
                x_2^2 & -x_1 x_2 \\
                -x_1 x_2 & x_1^2
            \end{pmatrix}.
        \end{eqnarray*}
        
        The denominator of both $\Pi_1$ and $\Pi_2$ is
        $x_1^2+x_2^2 \notin \Cbf^{\times}.$
        According to Theorem \ref{theorem1}, $A$ cannot be unitarily diagonalized over $\Cbf$.
    \end{example}

In the two examples above, the eigenvalues of the given matrices are polynomials in  $x_1, \ldots, x_m$, and hence the minimal polynomial $\mu_A(t)$  splits into linear factors. In general, however, the eigenvalues of a matrix with entries in a formal power series ring need not admit any explicit finite expression over $\Cbf$. 

%Factorization over the formal power series ring
%\[\Cbf \coloneq   \mathbb{C}[\![x_1,\dots,x_m]\!]\]
%presents computational difficulties that are qualitatively different from those arising in polynomial rings. Although $\Cbf$ is a unique factorization domain \cite[Chapter 2, lemma 2.2]{BAG2013I}, its elements are infinite series, and factorizations are typically governed by subtle analytic and algebraic structures.

From an algorithmic perspective, it is computationally intractable to determine whether a matrix over a multiparameter formal power series ring is diagonalizable via a finite procedure. %~\cite{MR4890428}.
The core obstacle lies in factorization.
Factorization over the formal power series ring
%\[\Cbf \coloneq   \mathbb{C}[\![x_1,\dots,x_m]\!]\]
presents computational difficulties that are qualitatively different from those that arise in polynomial rings \cite{MR682664,MR784750,Kaltofen1986,Kaltofen1991,MR4469154,MR4546085}. Although $\Cbf$ is a unique factorization domain \cite[Chapter 2, lemma 2.2]{BAG2013I},  % its elements are infinite series, and factorizations are typically governed by subtle analytic and algebraic structures.
 standard tools such as Puiseux expansions \cite{puiseux1850recherches} and Hensel lifting \cite{Hensel1904, ZASSENHAUS1969291,MR1907381} typically yield only truncated approximations rather than exact closed form factors~\cite{gathen2013modern, becker1993grobner}. To address this, our computational framework focuses on algebraic-geometric local rings, which admit effective finite-step algorithms, specifically via Mora's tangent cone algorithm~\cite{mora1982algorithm} and standard basis techniques~\cite{greuel2008singular} without sacrificing theoretical generality.

    \section{When are the eigenvalues power series?}\label{sec:split}
    
    Suppose now that the entries of $A$ belong to a regular local ring arising from a $\CC$-algebraic variety. More precisely, let \(X\) be an irreducible affine $\CC$-variety of dimension \(m\) embedded in \(\mathbb{C}^M\), and let the base point \(o\in\CC^M\) be a regular point of \(X\). Denote by $\xbf=(x_1,\dots,x_M)$ the coordinate variables of $\CC^M$, and by $\mathbf{I}(X)$ the defining ideal of $X$ in $\CC^M$. The affine coordinate ring of $X$ is then
    \[\Ocal_X(X)=\CC[\xbf]/\mathbf{I}(X).\] 
   % where $\Ocal_X$ is the structure sheaf of $X$. 
    
    The maximal ideal of $\Ocal_X(X)$ corresponding to $o$ is $\xbf\Ocal_X(X)$, and the regular local ring \(\Ocal_{X,o}\) is the localization of $\Ocal_X(X)$ at $\xbf\Ocal_X(X)$. Denote by $$\mfrak_{X,o}=\xbf\Ocal_{X,o}$$ the unique maximal ideal of $\Ocal_{X,o}$, and by $K=\Frac(\Ocal_{X,o})$ the fraction field of $\Ocal_{X,o}$. 
    %For any \(c\in\Ocal_{X,o}\), \(c=F/G\) where \(F,G\in \mathbb{C}[\xbf]/\mathbf{I}(X)\) and \(G(o)\neq 0\). 
    %Since $\dim(X)=m$ and $o\in X$ is regular, one deduces that $\Ocal_{X,o}$ is a regular local ring of dimension $m$. 
    
    Let $\mu(t)\in\Ocal_{X,o}[t]$ be a monic polynomial in variable $t$ over $\Ocal_{X,o}$. The purpose of this section is to determine whether all roots of $\mu$ lie in the $\mfrak_{X,o}$-adic completion $\widehat{\Ocal_{X,o}}$ of $\Ocal_{X,o}$. Here $\widehat{\Ocal_{X,o}}$ is identified with the power series ring $\Cbf$ in $m$ variables -- the regular local parameters of $\mfrak_{X,o}$ -- over $\CC$ . 
    %Applying Euclid's algorithm on $\mu$, we may assume $\mu$ is square-free if necessary. 
    
    Write 
    $$\mu(t)=t^d+c_1t^{d-1}+\cdots+c_d,$$ 
    with $c_k\in\Ocal_{X,o},\,k=1,\dots,d$. Introduce variables $\ybf=(y_1,\dots,y_d)$ to represent the roots of $\mu$. For $k=1,\dots,d$, denote by  
    $$e_k(\ybf)=\sum_{1\leqslant j_1<\dots<j_k\leqslant d}{y_{j_1}\dots y_{j_k}}.$$
    the $k$-th elementary symmetric polynomial in $\ybf$. Let $J$ be the ideal of $\Ocal_{X,o}[\ybf]$ generated by \textit{Vieta's polynomials} 
    \begin{equation}\label{Vieta}
        \xi_k(\ybf)\coloneq  e_k(\ybf)-(-1)^k c_k,\quad~\text{for}~k=1,\dots,d. 
    \end{equation}

The quotient ring $\Ocal_{X,o}[\ybf]/J$ is known as the \emph{universal decomposition algebra}~\cite{Bourbaki_Algegre_4, Lebreton&Schost_2012} or the \emph{splitting algebra}~\cite{Ekedahl&Laksov_2005, Laksov_2010, Thorup_2011, Laksov&Thorup_2012} of $\mu$ over $\Ocal_{X,o}$. This algebra encodes information regarding the roots of $\mu$ without assuming their specific form. Let $\Ass(\Ocal_{X,o}[\ybf]/J)$ denote the set of associated primes of the $\Ocal_{X,o}[\ybf]$-module $\Ocal_{X,o}[\ybf]/J$. We have the following result.

    %\begin{lemma}
    %    $e_1(\ybf),\dots,e_d(\ybf)\in\Ocal_{X,0}[\ybf]$ is algebraically independent over $\Ocal_{X,0}$, and $\Ocal_{X,0}[\ybf]$ is a finite free $\Ocal_{X,0}[\ybf]^{\Sigma_d}$-module of rank $d!$ with basis $\prod_{k=1}^d{y_k^{j_k}}$ for $0\leqslant j_k<k$. 
    %\end{lemma}
    \begin{proposition}
        $\Ass(\Ocal_{X,o}[\ybf]/J)$ are exactly the minimal primes containing $J$ in $\Ocal_{X,o}[\ybf]$, %and the height of any minimal primes containing $J$ in $\Ocal_{X,0}[\ybf]$ is $d$. 
        and are conjugate under the permutations on the variables $\ybf=(y_1,\dots,y_d)$. Moreover, the residue field $\kappa(Q)$ of $\Ocal_{X,o}[\ybf]$ at any $Q\in\Ass(\Ocal_{X,o}[\ybf]/J)$ is a splitting field of $\mu(t)$ over the fraction field $K$ of $\Ocal_{X,o}$. 
    \end{proposition}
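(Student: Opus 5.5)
Write $R=\Ocal_{X,o}$, $S=R[\ybf]$ and $B=S/J$. The plan is to exploit the standard structure of the splitting algebra: $B$ is a free $R$-module of rank $d!$.

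\textbf{Step 1 (freeness).} The elements $e_1(\ybf),\dots,e_d(\ybf)$ are algebraically independent, and $R[\ybf]$ is free over $R[e_1,\dots,e_d]$ with the standard basis $\prod_k y_k^{j_k}$, $0\le j_k<k$. Base changing along $e_k\mapsto(-1)^kc_k$ shows that $B\cong R[\ybf]\otimes_{R[\mathbf e]}R$ is free of rank $d!$ over $R$. Alternatively, one can build $B$ by iterated adjunction: adjoin one root to $\mu$, divide $\mu$ by $(t-y_1)$, and repeat with the quotient.

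\textbf{Step 2 (associated primes are minimal).} Since $B$ is free over $R$, every nonzero element of $R$ is a nonzerodivisor on $B$. Hence any $Q\in\Ass_S(B)$ satisfies $Q\cap R=0$, because an element of $Q\cap R$ annihilates some nonzero element of $B$. So associated primes of $B$ correspond bijectively to associated primes of $B\otimes_R K=K[\ybf]/JK[\ybf]$, a finite-dimensional $K$-algebra. Every prime of an Artinian ring is both minimal and maximal, so each associated prime is minimal. Conversely, minimal primes over $J$ are always associated. This uses the correspondence of associated primes under the localization $S\to S\otimes_R K$, which is routine.

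\textbf{Step 3 (residue fields).} Let $Q\in\Ass(B)$. Then $\kappa(Q)$ is the residue field of a maximal ideal of $K[\ybf]/JK[\ybf]$. It is therefore generated over $K$ by the images $\bar y_1,\dots,\bar y_d$. By Vieta's relations $\xi_k$, $\mu(t)=\prod_j(t-\bar y_j)$ in $\kappa(Q)[t]$. Hence $\kappa(Q)$ is a splitting field of $\mu$ over $K$.

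\textbf{Step 4 (conjugacy).} This is the main obstacle. Let $L$ be a fixed splitting field with roots $\alpha_1,\dots,\alpha_d$ listed with multiplicity. A maximal ideal of $K[\ybf]/J$ is the kernel of a $K$-homomorphism $\phi:K[\ybf]\to\overline K$ that kills $J$, up to $\Gal$-conjugation of the target. Such a $\phi$ sends $y_j\mapsto\beta_j$ with $\prod(t-\beta_j)=\mu(t)$, so $(\beta_j)$ is a permutation $\sigma$ of $(\alpha_j)$. Hence every maximal ideal has the form $\ker(y_j\mapsto\alpha_{\sigma(j)})$. Since $\ker(y_j\mapsto\alpha_{\sigma(j)})=\sigma^{-1}\cdot\ker(y_j\mapsto\alpha_j)$ under the action permuting variables, all these primes are conjugate, and so are their contractions to $S$ (the permutation action commutes with $S\subset K[\ybf]$). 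When $\mu$ has repeated roots the choice of $\sigma$ is not unique, but conjugacy still holds. I would verify carefully that the identification of associated primes is equivariant. Note that $\mu$ need not be squarefree, which is why I argue through associated primes rather than reducedness.
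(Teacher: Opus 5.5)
Your proposal is correct, but it reaches the main claims by a different route than the paper.

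The paper's route:
\begin{itemize}
\item It uses the freeness of $\Ocal_{X,o}[\ybf]$ over $\Ocal_{X,o}[e_1,\dots,e_d]$ only to see that $J\cap\Ocal_{X,o}[\ybf]^{\Sigma_d}$ is a height-$d$ prime with quotient $\Ocal_{X,o}$.
\item It then gets $\operatorname{ht}J=d$ by going-up.
\item It obtains that associated primes are minimal from the Unmixedness Theorem, since $\Ocal_{X,o}[\ybf]$ is Cohen--Macaulay and $J$ has $d$ generators.
\item Conjugacy comes from transitivity of $\Sigma_d$ on the primes lying over $J\cap\Ocal_{X,o}[\ybf]^{\Sigma_d}$.
\item The splitting-field claim is cited as folklore.
\end{itemize}

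Your route:
\begin{itemize}
\item You base-change the same freeness to get that $\Ocal_{X,o}[\ybf]/J$ is free of rank $d!$ over $\Ocal_{X,o}$.
\item Hence every associated prime contracts to $(0)$, and everything reduces to the Artinian generic fibre $K[\ybf]/JK[\ybf]$.
\item There all primes are maximal and are explicitly the kernels of $y_j\mapsto\alpha_{\sigma(j)}$, which gives conjugacy and the residue-field statement at once.
\end{itemize}

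What each approach buys. Yours is more elementary and more general: it needs only that $\Ocal_{X,o}$ is a Noetherian domain, with no Cohen--Macaulay or height input. It also makes explicit two facts the paper uses tacitly later:
\begin{itemize}
\item every $Q\in\Ass$ satisfies $Q\cap\Ocal_{X,o}=0$;
\item $\kappa(Q)=K[\ybf]/QK[\ybf]$ is generated over $K$ by roots $\bar y_j$ of $\mu$.
\end{itemize}
The paper's route instead yields $\operatorname{ht}J=d$ and uses standard structural results.

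The equivariance you wanted to check in Step 4 needs no further work. Since the permutation automorphism $\tau$ preserves $S$, you have $\tau^{-1}(\mathfrak M_0)\cap S=\tau^{-1}(\mathfrak M_0\cap S)$ directly.
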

    \begin{proof}
        It is well known that the invariant subalgebra of $\Ocal_{X,o}[\ybf]$ under the permutations $\Sigma_d$ on the variables $\ybf=(y_1,\dots,y_d)$ is 
        $$\Ocal_{X,o}[\ybf]^{\Sigma_d}=\Ocal_{X,o}[e_1(\ybf),\dots,e_d(\ybf)],$$
        the symmetric polynomial algebra over $\Ocal_{X,o}$, which itself is a polynomial algebra over $\Ocal_{X,o}$ in variables $e_1(\ybf),\dots,e_d(\ybf)$. Moreover, $\Ocal_{X,o}[\ybf]$ is a finite free $\Ocal_{X,o}[\ybf]^{\Sigma_d}$-module of rank $d!$ with a basis $y_1^{k_1}\cdots y_d^{k_d}$ for $0\leqslant k_j<j$ and $1\leqslant j\leqslant d$ \cite[\S 6, Th{\'e}or{\`e}me 1]{Bourbaki_Algegre_4}. Now one sees that $J^c\coloneq J\cap\Ocal_{X,o}[\ybf]^{\Sigma_d}$ is a prime of height $d$ generated by linear polynomials (\ref{Vieta}), and the quotient domain of $J^c$ is 
        \[\Ocal_{X,o}[\ybf]^{\Sigma_d}/J^c\cong\Ocal_{X,o}.\]
        Hence, Going-Up Theorem for the integral extension $\Ocal_{X,o}[\ybf]^{\Sigma_d}\subseteq\Ocal_{X,o}[\ybf]$ implies that $J$ is also of height $d$.
        
        Since $\Ocal_{X,o}[\ybf]$ is a Cohen-Macaulay ring and $J$ is generated by $d$ elements (\ref{Vieta}), the Unmixedness Theorem \cite[Theorem 17.6]{Matsumura_1987} implies that $J$ is unmixed, i.e., $\Ass(\Ocal_{X,o}[\ybf]/J)$ are exactly the minimal primes containing $J$ in $\Ocal_{X,o}[\ybf]$. The minimal primes containing $J$ are exactly the primes lying over $J^c$.  Hence the Galois theory implies that $\Ass(\Ocal_{X,o}[\ybf]/J)$ are conjugate under the permutations on the variables $\ybf=(y_1,\dots,y_d)$. It is a folklore in field theory that the residue field $\kappa(Q)$ of $\Ocal_{X,o}[\ybf]$ at any $Q\in\Ass(\Ocal_{X,o}[\ybf]/J)$ is a splitting field of $\mu(t)$ over the fraction field $K$ of $\Ocal_{X,o}$. 
    \end{proof}

    Fix an associated prime $Q\in\Ass(\Ocal_{X,o}[\ybf]/J)$ and let 
    \[L=\kappa(Q)\]
    be the residue field of $\Ocal_{X,o}[\ybf]$ at $Q$. 
    Then $L$ is a finite Galois extension of $K=\Frac(\Ocal_{X,o})$. Since both $\Ocal_{X,o}$ and $\widehat{\Ocal_{X,o}}$ are regular local rings, they are also normal integral domains. Let $\overline{\Ocal_{X,o}}^L$ be the integral closure of $\Ocal_{X,o}$ in $L$. Since $\Ocal_{X,o}[\ybf]/Q$ is an integral extension of $\Ocal_{X,o}$ with a fraction field $L=\Frac(\Ocal_{X,o}[\ybf]/Q)$, $\overline{\Ocal_{X,o}}^L$ is also the integral closure $\overline{\Ocal_{X,o}[\ybf]/Q}$ of $\Ocal_{X,o}[\ybf]/Q$ in $L$. Then we have the following criterion, formulated in terms of the Jacobian matrix associated with any finite presentation of $\overline{\Ocal_{X,o}}^{L}$ over $\Ocal_{X,o}$.
    
    %Notice that the power series ring $\widehat{\Ocal_{X,o}}$ is also a regular local ring, we have the following  criteria,  formulated in terms of the Jacobian matrix associated with any finite presentation of $\overline{\Ocal_{X,o}}^{L}$ over $\Ocal_{X,o}$.

    \begin{theorem}\label{Thm:PowerSeriesCriteria4Jacobian}

 Let $\overline{\Ocal_{X,o}}^{L}=\Ocal_{X,o}[z_1,\dots,z_N]/Q'$ be a finite presentation of $\overline{\Ocal_{X,o}}^{L}$ over $\Ocal_{X,o}$, where $Q'$ is generated by $ g_1,\dots,g_D\in\Ocal_{X,o}[z_1,\dots,z_N]$. Then we necessarily have $D \geqslant N$.
 %Then necessarily $D\geqslant N$.
 Moreover, % $\overline{\Ocal_{X,o}}^{L}$ is unramified over $\Ocal_{X,o}$ at a maximal $\nfrak\subset\overline{\Ocal_{X,o}}^{L}$ (., 
        all roots of $\mu(t)$ belong to $\widehat{\Ocal_{X,o}}$ if and only if the Jacobian matrix $$\frac{\partial(f_1,\dots,f_D)}{\partial(z_1,\dots,z_N)}\mod\nfrak$$ 
        has full rank $N$ in $\kappa(\nfrak)^{D\times N}$.
    \end{theorem}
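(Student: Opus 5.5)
We will argue through the ramification theory of the normal finite extension $B\coloneqq\overline{\Ocal_{X,o}}^L$ of $A\coloneqq\Ocal_{X,o}$. Throughout we read the statement with $f_i=g_i$ and with $\nfrak$ a maximal ideal of $B$ lying over $\mfrak_{X,o}$. By Galois conjugacy (the Proposition, together with transitivity of $\Gal(L/K)$ on the maximal ideals of $B$ over $\mfrak_{X,o}$) the choice of $\nfrak$ does not matter.

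\textbf{Step 1: reduce the splitting question to unramifiedness.} All roots of $\mu$ lie in $\widehat{A}=\Cbf$ if and only if the splitting field $L$ embeds into $\Frac(\widehat A)$ over $K$. This in turn holds if and only if $\widehat A\otimes_A B\cong\prod_{\nfrak}\widehat{B_\nfrak}$ has a factor equal to $\widehat A$, i.e.\ $\widehat{B_\nfrak}=\widehat A$ for some (hence every) $\nfrak$. We argue both directions.

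For ``$\Leftarrow$'': if $\widehat{B_\nfrak}=\widehat A$, then the image of $\ybf$ in $B\subset\widehat{B_\nfrak}$ gives roots of $\mu$ in $\widehat A$. For ``$\Rightarrow$'': if the roots lie in $\widehat A$, then $L=K(\text{roots})$ embeds into $\Frac(\widehat A)$. Since $\widehat A$ is normal and $B$ is integral over $A$, the image of $B$ lands in $\widehat A$. The kernel of the resulting map $\widehat A\otimes_AB\to\widehat A$ singles out a factor $\widehat{B_\nfrak}$ that is finite over $\widehat A$ of generic rank $1$ and surjects onto $\widehat A$. A finite extension of domains $\widehat A\subseteq \widehat{B_\nfrak}$ that is generically trivial and admits such a retraction must be an equality. (Here we use that $B$ is normal, so $A$ excellent implies $\widehat{B_\nfrak}$ is a normal domain.)

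Now $\widehat{B_\nfrak}=\widehat A$ is equivalent to $\kappa(\nfrak)=\CC$ (automatic, since $\CC$ is algebraically closed) together with $\mfrak_{X,o}B_\nfrak=\nfrak B_\nfrak$. That is, the condition is that $A\to B$ is unramified at $\nfrak$, equivalently étale at $\nfrak$. Flatness is free here: $B_\nfrak$ is normal and unramified over the regular ring $A$, so by the structure of unramified extensions of regular local rings $B_\nfrak$ is regular of dimension $m$, and then miracle flatness applies.

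\textbf{Step 2: Jacobian criterion.} Unramifiedness of $A\to B$ at $\nfrak$ is equivalent to $\Omega_{B/A}\otimes\kappa(\nfrak)=0$. From the presentation $B=A[\zbf]/Q'$ we have the conormal sequence $Q'/Q'^2\to\bigoplus B\,dz_j\to\Omega_{B/A}\to0$. Hence $\Omega_{B/A}\otimes\kappa(\nfrak)$ is the cokernel of the transpose of the Jacobian $\partial(g_1,\dots,g_D)/\partial(z_1,\dots,z_N)$ mod $\nfrak$. This cokernel vanishes exactly when the Jacobian has rank $N$. The first claim $D\geqslant N$ we expect to follow from a height argument. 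The ideal $Q'$ has height $N$ in $A[\zbf]$, since $B$ is finite over $A$ and so $\dim B=\dim A$, with $A[\zbf]$ catenary. By Krull's height theorem $Q'$ needs at least $N$ generators.

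\textbf{Main obstacle.} The delicate part is the direction ``roots in $\widehat A$ $\Rightarrow$ unramified'' in Step 1, namely identifying $\widehat{B_\nfrak}$ with $\widehat A$. We plan to handle it via the decomposition $\widehat A\otimes_A B\cong\prod_\nfrak\widehat{B_\nfrak}$, using the normality (analytic unramifiedness) of $B$ coming from excellence. An alternative is to pass to the Henselization $A^h$: roots in $\widehat A$ are algebraic over $K$, so by Artin approximation they already lie in $A^h$, which is a filtered union of étale neighbourhoods of $A$. Then $B\to A^h$ factors through some local étale $A$-algebra, forcing $B_\nfrak$ to be étale over $A$.
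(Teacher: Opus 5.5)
Your argument is correct, but it takes a different route from the paper. Write $A=\Ocal_{X,o}$ and $B=\overline{\Ocal_{X,o}}^{L}$. Your Step~2 matches the paper: the conormal sequence plus Nakayama show that unramifiedness at $\nfrak$ is exactly the rank-$N$ condition on the Jacobian mod $\nfrak$. Your height count for $D\geqslant N$ also matches. The genuine difference is how you prove ``all roots in $\widehat{A}$ $\Leftrightarrow$ $A\to B$ unramified at $\nfrak$.''

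The paper never looks at $\widehat{B_{\nfrak}}$. It first replaces $\widehat{A}$ by the Henselization $A^h$ via Artin approximation ($E\cap\Frac(\widehat{A})=\Frac(A^h)=E^D$). From $L\subseteq E^D$ it deduces that the decomposition group of $(\mfrak')^L$ is trivial, hence (Nagata, Thm.~41.2) that $B$ is unramified there. For the converse it upgrades unramified to \'etale (Milne, Ch.~I, Thm.~3.20), so that $B$ becomes an \'etale neighbourhood with trivial residue extension and therefore maps into $A^h\subset\widehat{A}$.

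You instead work directly in $\widehat{A}\otimes_A B\cong\prod_{\nfrak}\widehat{B_{\nfrak}}$. An $A$-map $B\to\widehat{A}$ yields a surjection $\widehat{B_{\nfrak}}\to\widehat{A}$. This is an isomorphism because $\widehat{B_{\nfrak}}$ is a domain (Zariski's analytic normality, via excellence) of the same dimension $m$. Conversely, $\mfrak B_{\nfrak}=\nfrak B_{\nfrak}$ and $\kappa(\nfrak)=\CC$ make $\widehat{A}\to\widehat{B_{\nfrak}}$ surjective by Nakayama. It is injective since $\dim B_{\nfrak}=m$ (going-down over the normal ring $A$) and $\widehat{A}$ is a domain.

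\textbf{What each route buys.} Yours is shorter and replaces Artin approximation and decomposition groups by a single input, analytic normality, which is needed only in the direction roots $\Rightarrow$ unramified. It also shows that flatness and \'etaleness play no role in the criterion, so your ``flatness is free'' remark can be dropped. The paper's route gives extra information: the roots already lie in $A^h$ (they are algebraic power series), and $B$ is \'etale over $A$. It is also the framework that the paper's arithmetic remark (strict Henselization and inertia groups over a number field) is designed to generalize.

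One expository point: ``generic rank $1$'' is not a separate input. It is a consequence of $\widehat{B_{\nfrak}}$ being an $m$-dimensional domain surjecting onto the $m$-dimensional domain $\widehat{A}$, since a nonzero kernel would lower the dimension. Present that step in this order.
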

    In order to prove this theorem, we need to introduce some notations and lemmas.

    \begin{lemma}\label{Lem:PowerSeriesCriteria1}
        The following statements are equivalent:
        \begin{enumerate}[label=(\arabic*)]
            \item\label{Lem:PowerSeriesCriteria1.1} All roots of $\mu(t)$ over $K$ belong to $\widehat{\Ocal_{X,o}}$;

            \item\label{Lem:PowerSeriesCriteria1.2} There exists an $\Ocal_{X,o}$-algebra map $\Ocal_{X,o}[\ybf]/Q\rightarrow\widehat{\Ocal_{X,o}}$;
            
            \item\label{Lem:PowerSeriesCriteria1.3} There exists a $K$-algebra map $L\rightarrow\Frac(\widehat{\Ocal_{X,o}})$;
            
            \item\label{Lem:PowerSeriesCriteria1.4} There exists an $\Ocal_{X,o}$-algebra map $\overline{\Ocal_{X,o}[\ybf]/Q}\rightarrow\widehat{\Ocal_{X,o}}$.
        \end{enumerate}
        Moreover, if any statement above holds, then the algebra map therein is an injection. 
    \end{lemma}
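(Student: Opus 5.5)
I will prove the cycle of implications \ref{Lem:PowerSeriesCriteria1.1}$\Rightarrow$\ref{Lem:PowerSeriesCriteria1.2}$\Rightarrow$\ref{Lem:PowerSeriesCriteria1.3}$\Rightarrow$\ref{Lem:PowerSeriesCriteria1.4}$\Rightarrow$\ref{Lem:PowerSeriesCriteria1.1}, and then treat injectivity separately. Throughout I use that $\widehat{\Ocal_{X,o}}\cong\Cbf$ is a normal domain containing $\Ocal_{X,o}$, since completion is faithfully flat. I also use that $L=\kappa(Q)=\Frac(\Ocal_{X,o}[\ybf]/Q)$ is generated over $K$ by the images $\bar y_1,\dots,\bar y_d$, which are the roots of $\mu$ (with multiplicity) in $L$.

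For \ref{Lem:PowerSeriesCriteria1.1}$\Rightarrow$\ref{Lem:PowerSeriesCriteria1.2}: if $\mu(t)=\prod_j(t-r_j)$ with $r_j\in\widehat{\Ocal_{X,o}}$, define $\phi:\Ocal_{X,o}[\ybf]\to\widehat{\Ocal_{X,o}}$ by $y_j\mapsto r_j$. Then $\phi$ kills $J$, so $\ker\phi$ is a prime containing $J$, because the target is a domain.
\begin{itemize}
\item First I show $\ker\phi$ is minimal over $J$. The ring $\Ocal_{X,o}[\ybf]/J$ is integral over $\Ocal_{X,o}$, and $\ker\phi\cap\Ocal_{X,o}=0$. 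Hence $\ker\phi$ lies over the prime $J^c$ of the symmetric subalgebra, as in the proof of the Proposition, and is therefore minimal over $J$.
\item By the Proposition, $\ker\phi=\sigma(Q)$ for some permutation $\sigma$. Composing $\phi$ with $\sigma$, i.e.\ relabelling the $r_j$, gives a map killing $Q$.
\end{itemize}

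For \ref{Lem:PowerSeriesCriteria1.2}$\Rightarrow$\ref{Lem:PowerSeriesCriteria1.3}: a map $\psi:\Ocal_{X,o}[\ybf]/Q\to\widehat{\Ocal_{X,o}}$ is injective. Indeed, $\ker\psi$ is a prime of $\Ocal_{X,o}[\ybf]/Q$ meeting $\Ocal_{X,o}$ in $0$, since $\Ocal_{X,o}\to\widehat{\Ocal_{X,o}}$ is injective. By incomparability for the integral extension $\Ocal_{X,o}\subseteq\Ocal_{X,o}[\ybf]/Q$, it must be $0$. So $\psi$ extends to fraction fields, giving $L\to\Frac(\widehat{\Ocal_{X,o}})$.

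For \ref{Lem:PowerSeriesCriteria1.3}$\Rightarrow$\ref{Lem:PowerSeriesCriteria1.4}: restrict the field map to $\overline{\Ocal_{X,o}}^L=\overline{\Ocal_{X,o}[\ybf]/Q}$. Each element is integral over $\Ocal_{X,o}$, so its image lies in $\Frac(\widehat{\Ocal_{X,o}})$ and is integral over $\widehat{\Ocal_{X,o}}$. Normality of the regular local ring $\widehat{\Ocal_{X,o}}$ puts the image in $\widehat{\Ocal_{X,o}}$.

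For \ref{Lem:PowerSeriesCriteria1.4}$\Rightarrow$\ref{Lem:PowerSeriesCriteria1.1}: the $\bar y_j$ lie in $\overline{\Ocal_{X,o}[\ybf]/Q}$, and $\mu(t)=\prod_j(t-\bar y_j)$ holds there because the Vieta relations hold modulo $J\subseteq Q$. Applying the given map yields a factorization of $\mu$ into linear factors over $\widehat{\Ocal_{X,o}}$. This factorization holds inside the field $\Frac(\widehat{\Ocal_{X,o}})$, and every root of $\mu$ (over an algebraic closure containing both $K$ and the completion) is one of these images, all of which lie in $\widehat{\Ocal_{X,o}}$. Injectivity in \ref{Lem:PowerSeriesCriteria1.4} follows by the same incomparability argument as in the second step, since $\overline{\Ocal_{X,o}}^L$ is an integral domain integral over $\Ocal_{X,o}$. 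In \ref{Lem:PowerSeriesCriteria1.3} injectivity is automatic for field maps.

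The only delicate point is the first implication. There one must argue that the kernel of the evaluation map is a minimal prime over $J$, so that the Proposition's transitivity of $\Sigma_d$ applies; this comes from lying over $J^c$ together with integrality.
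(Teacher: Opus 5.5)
Your proposal is correct and follows essentially the paper's argument. The kernel of the evaluation map lies over $0$, hence over $J^c$, so it is conjugate to $Q$. Injectivity comes from lying over zero plus integrality, and normality of $\widehat{\Ocal_{X,o}}$ handles the integral-closure step. You arrange the implications as the single cycle (1)$\Rightarrow$(2)$\Rightarrow$(3)$\Rightarrow$(4)$\Rightarrow$(1), whereas the paper proves them pairwise, and you spell out the minimality and incomparability details that the paper leaves implicit.
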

    
    \begin{proof}
        \ref{Lem:PowerSeriesCriteria1.1}$\Rightarrow$\ref{Lem:PowerSeriesCriteria1.2}: If all the roots of $\mu(t)$ belong to $\widehat{\Ocal_{X,o}}$, then there is an $\Ocal_{X,o}$-algebra map $\varphi:\Ocal_{X,o}[\ybf]\rightarrow\widehat{\Ocal_{X,o}}$ sending $\ybf$ to the roots of $\mu(t)$ in $\widehat{\Ocal_{X,o}}$. The kernel of $\varphi$ lies over the zero ideal of $\Ocal_{X,o}$, which implies that $\ker\varphi$ conjugates with $Q$ under the permutations on $\ybf$. Hence, there is an $\Ocal_{X,o}$-algebra injection $\overline{\varphi}:\Ocal_{X,o}[\ybf]/Q\rightarrow\widehat{\Ocal_{X,o}}$.

        \ref{Lem:PowerSeriesCriteria1.2}$\Rightarrow$\ref{Lem:PowerSeriesCriteria1.1}: If there is an $\Ocal_{X,o}$-algebra map $\Ocal_{X,o}[\ybf]/Q\rightarrow\widehat{\Ocal_{X,o}}$, then the kernel lies over the zero ideal of $\Ocal_{X,o}$, which implies that $\Ocal_{X,o}[\ybf]/Q\rightarrow\widehat{\Ocal_{X,o}}$ is an injection, and the images of $\ybf~\mathrm{mod}~Q$ are the all roots of $\mu(t)$ over $K$. 

        \ref{Lem:PowerSeriesCriteria1.2}$\Rightarrow$\ref{Lem:PowerSeriesCriteria1.3}: The $\Ocal_{X,o}$-algebra map $\Ocal_{X,o}[\ybf]/Q\rightarrow\widehat{\Ocal_{X,o}}$ in (2) induces the fraction field extension $L\rightarrow\Frac(\widehat{\Ocal_{X,o}})$ over $K$.

        \ref{Lem:PowerSeriesCriteria1.3}$\Rightarrow$\ref{Lem:PowerSeriesCriteria1.1}: If there exists a $K$-algebra map $L\rightarrow\Frac(\widehat{\Ocal_{X,o}})$, then the images of $\ybf$ are the all roots of $\mu(t)$ over $K$.

        \ref{Lem:PowerSeriesCriteria1.3}$\Rightarrow$\ref{Lem:PowerSeriesCriteria1.4}: %Recall that $\Ocal_{X,o}$ and $\widehat{\Ocal_{X,o}}$ are respectively integrally closed in their fraction field $K$ and $\Frac(\widehat{\Ocal_{X,o}})$, hence the integral closure of $\Ocal_{X,o}$ in $\Frac(\widehat{\Ocal_{X,o}})$ is contained in $\widehat{\Ocal_{X,o}}$. 
        Recall that $\Ocal_{X,o}$ and $\widehat{\Ocal_{X,o}}$ are integrally closed in their respective fraction fields $K$ and $\Frac(\widehat{\Ocal_{X,o}})$. Consequently, the integral closure of $\Ocal_{X,o}$ in $\Frac(\widehat{\Ocal_{X,o}})$ is contained in $\widehat{\Ocal_{X,o}}$.
        By extension of the field $L\rightarrow\Frac(\widehat{\Ocal_{X,o}})$ over $K$ in \ref{Lem:PowerSeriesCriteria1.3}, the integral closure of $\overline{\Ocal_{X,o}}^L=\overline{\Ocal_{X,o}[\ybf]/Q}$ in $\Frac(\widehat{\Ocal_{X,o}})$ must also be contained in $\widehat{\Ocal_{X,o}}$.

        \ref{Lem:PowerSeriesCriteria1.4}$\Rightarrow$\ref{Lem:PowerSeriesCriteria1.2}: $\Ocal_{X,o}[\ybf]/Q\rightarrow\widehat{\Ocal_{X,o}}$ in \ref{Lem:PowerSeriesCriteria1.2} is obtained by combining $\Ocal_{X,o}[\ybf]/Q\rightarrow\overline{\Ocal_{X,o}[\ybf]/Q}$ and $\overline{\Ocal_{X,o}[\ybf]/Q}\rightarrow\widehat{\Ocal_{X,o}}$ in \ref{Lem:PowerSeriesCriteria1.4}.
    \end{proof}

    Notice that $\Frac(\widehat{\Ocal_{X,o}})$ is a considerably ``wild'' extension field of $K=\Frac(\Ocal_{X,o})$, since $\Frac(\widehat{\Ocal_{X,o}})$ is transcendental over $K$, while $L$ must be algebraic over $K$. Thus, we need an algebraic approximation of $\widehat{\Ocal_{X,o}}$ to refine the lemma \ref{Lem:PowerSeriesCriteria1}. Fortunately, there does indeed exist such a substitution, which is the Henselization $\Ocal_{X,o}^h$ of $\Ocal_{X,o}$ \cite[\S 18]{EGA_IV}\cite[Chapter 1]{Milne_1980}\cite[Chapter VII]{Nagata_1962}\cite{Raynaud_1970}. We also recommend the Stacks Project to readers for more information on Henselization as well as the ramification theory we will use subsequently. For convenience, we shall list the properties of the Henselization of $\Ocal_{X,o}$ to be used. First, we review some basic settings in ramification theory.  

\begin{notation}
Let $K = \Frac(\Ocal_{X,o})$.
\begin{enumerate}[label=(\Roman*), itemsep=1ex]
    \item Let $E$ be the separable closure of $K$, viewed as a subfield of a separable closure of $\Frac(\widehat{\Ocal_{X,o}})$, and denote the absolute Galois group of $K$ by
    \[
        G \coloneq \Gal(E|K).
    \]

    \item Let $\overline{\Ocal_{X,o}}^{E}$ be the integral closure of $\Ocal_{X,o}$ in $E$, and define
    \[
        \mfrak' \coloneq \widehat{\mfrak_{X,o}} \cap \overline{\Ocal_{X,o}}^{E},
    \]
    which is the maximal ideal of $\overline{\Ocal_{X,o}}^{E}$ lying over $\mfrak_{X,o}$.

    \item Let
    \[
        D \coloneq D(\mfrak'|\mfrak_{X,o}) 
        = \{\sigma \in G \mid \sigma(\mfrak') = \mfrak'\}
    \]
    be the decomposition group of $\mfrak'$ over $\mfrak_{X,o}$, and denote by $E^{D}$ its fixed field (the decomposition field of $K$).

    \item Let $\overline{\Ocal_{X,o}}^{D}$ be the integral closure of $\Ocal_{X,o}$ in $E^{D}$, and set
    \[
        (\mfrak')^{D} \coloneq \mfrak' \cap E^{D},
    \]
    which is the maximal ideal of $\overline{\Ocal_{X,o}}^{D}$.
\end{enumerate}
\end{notation}

\begin{comment}
    \begin{notation}
    Let $K=\Frac(\Ocal_{X,o})$.
    \begin{enumerate}[label=(\Roman*), itemsep=1ex]
        \item Let $E$ be the separable closure of $K$ embedded in a separable closure of $\Frac(\widehat{\Ocal_{X,o}})$.
        
        \item Let $G=\Gal(E|K)$ be the absolute Galois group of $K$.

        \item Let $\overline{\Ocal_{X,o}}^E$ be the integral closure of $\Ocal_{X,o}$ in $E$.

        \item Let $\mfrak'=\widehat{\mfrak_{X,o}}\cap\overline{\Ocal_{X,o}}^E$ be the maximal ideal of $\overline{\Ocal_{X,o}}^E$ lying over $\mfrak_{X,o}$.

        \item Let $D\coloneq D(\mfrak'|\mfrak_{X,o})=\{\sigma\in G : \sigma(\mfrak')=\mfrak'\}$ be the decomposition group of $\mfrak'$ over $\mfrak_{X,o}$.

        \item Let $E^D$ be the fixed subfield of $E$ under $D$ (the decomposition field of $K$).

        \item Let $\overline{\Ocal_{X,o}}^D$ be the integral closure of $\Ocal_{X,o}$ in $E^D$.

        \item Let $(\mfrak')^D\coloneq \mfrak'\cap E^D$ be the maximal ideal of $\overline{\Ocal_{X,o}}^D$.
    \end{enumerate}
    \end{notation}
\end{comment}

    Now we can state the properties of the Henselizaion.

    \begin{property}
      [properties of Henselization]\cite{EGA_IV,Milne_1980,Nagata_1962,Raynaud_1970}\label{lem:PropertesOfHenselization}\hfill
        \begin{enumerate}[label=(\roman*)]

            \item\label{lem:PropertesOfHenselization1} $\Ocal_{X,o}^h$ is a Henselian local extension of $\Ocal_{X,o}$ with maximal ideal $\mfrak_{X,o}^h=\mfrak_{X,o}\Ocal_{X,o}^h$ and residue field $\kappa(\mfrak_{X,o}^h)=\kappa(\mfrak_{X,o})$;
            %\item\label{lem:PropertesOfHenselization1} $\Ocal_{X,o}^h$ is a local ring extension of $\Ocal_{X,o}$, which is Henselian and of the maximal ideal $\mfrak_{X,o}^h=\mfrak_{X,o}\Ocal_{X,o}^h$ and the residue field $\kappa(\mfrak_{X,o}^h)=\kappa(\mfrak_{X,o})$;
            
            \item\label{lem:PropertesOfHenselization2} $\Ocal_{X,o}^h$ is the local ring (i.e., the direct limit) regarding to the {\'etale} neighborhoods of $o$ in $X$ with trivial residue field extension; 
            
            %\item\label{lem:PropertesOfHenselization2} $\Ocal_{X,o}^h$ is the direct limit of the system of étale neighborhoods of $o$ in $X$ with trivial residue field extension;
            \item\label{lem:PropertesOfHenselization3} $\Ocal_{X,o}^h$ is a regular local ring of dimension $m=\dim\Ocal_{X,o}$;
            
            \item\label{lem:PropertesOfHenselization4} The canonical series of $\Ocal_{X,o}$-algebra bijections $$\Ocal_{X,o}/(\mfrak_{X,o})^e\xrightarrow{\sim}\Ocal_{X,o}^h/(\mfrak_{X,o}^h)^e,~\text{for}~e\in\ZZ,$$ 
            induces the $\Ocal_{X,o}$-algebra injection $$\Ocal_{X,o}^h\rightarrow\widehat{\Ocal_{X,o}}$$ 
            and the $K$-algebra injection $\Frac(\Ocal_{X,o}^h)\rightarrow\Frac(\widehat{\Ocal_{X,o}})$;
            
            \item\label{lem:PropertesOfHenselization5} $\Ocal_{X,o}^h$ is isomorphic to the local ring $(\overline{\Ocal_{X,o}}^D)_{(\mfrak')^D}$ as an $\Ocal_{X,o}$-algebra, and its fraction field $\Frac(\Ocal_{X,o}^h)$ is isomorphic to $E^D$ as a $K$-algebra. In the sequel, we identify  $\Ocal_{X,o}^h$ and $\Frac(\Ocal_{X,o}^h)$ with $(\overline{\Ocal_{X,o}}^D)_{(\mfrak')^D}$ and $E^D$, respectively.

            \item\label{lem:PropertesOfHenselization6} (Specical case of Artin's Approximation \cite{MichaelArtin1969}) The equalities hold
            \begin{itemize}
                \item $\overline{\Ocal_{X,o}}^E\cap\widehat{\Ocal_{X,o}}=\Ocal_{X,o}^h$,

                \item $E\cap\widehat{\Ocal_{X,o}}=\Ocal_{X,o}^h$,  

                \item $\overline{\Ocal_{X,o}}^E\cap\Frac(\widehat{\Ocal_{X,o}})=\Ocal_{X,o}^h$, and
                
                \item $E\cap\Frac(\widehat{\Ocal_{X,o}})=\Frac(\Ocal_{X,o}^h)$.
            \end{itemize}
        \end{enumerate}
      %  \end{property}
    \end{property}
    
    %Roughly speaking, M. Artin's approximation theorem \cite[Theorem (1.10)]{MichaelArtin1969} implies that given any polynomial system $\Scal$ in finite variables with coefficients in the Henselization $\Ocal_{X,o}^h$, any solution of $\Scal$ in $\widehat{\Ocal_{X,o}}$ can be approximated by a solution of $\Scal$ in $\Ocal_{X,o}^h$ in arbitrary given precise. As a corollary, if $S$ has only finite solutions in $\Ocal_{X,o}^h$, then any solution of $\Scal$ in $\widehat{\Ocal_{X,o}}$ is actually in $\Ocal_{X,o}^h$. 
   % Using Properties \ref{lem:PropertesOfHenselization}, the following refinement of Lemma \ref{Lem:PowerSeriesCriteria1} holds. 

    Using Property~\ref{lem:PropertesOfHenselization}, we obtain the following refinement of Lemma~\ref{Lem:PowerSeriesCriteria1}.
    
    \begin{lemma}[continuing Lemma \ref{Lem:PowerSeriesCriteria1}] \label{Lem:PowerSeriesCriteria2}
        The following statements are equivalent to the statements in Lemma \ref{Lem:PowerSeriesCriteria1}:
        \begin{enumerate}[label=(\arabic*')]
           % \item All the roots of $\mu(t)$ over $K$ belong to $\widehat{\Ocal_{X,o}}$;

            \item\label{Lem:PowerSeriesCriteria2.1} All roots of $\mu(t)$ over $K$ belong to $\Ocal_{X,o}^h$;

            \item\label{Lem:PowerSeriesCriteria2.2} There exists an $\Ocal_{X,o}$-algebra map $\Ocal_{X,o}[\ybf]/Q\rightarrow\Ocal_{X,o}^h$;
            
            \item\label{Lem:PowerSeriesCriteria2.3} There exists a $K$-algebra map $L\rightarrow\Frac(\Ocal_{X,o}^h)$;
            
            \item\label{Lem:PowerSeriesCriteria2.4} There exists an $\Ocal_{X,o}$-algebra map $\overline{\Ocal_{X,o}[\ybf]/Q}\rightarrow\Ocal_{X,o}^h$.
        \end{enumerate}
        Moreover, if any of the above statements holds, the associated algebra map is injective.
        
        %Moreover, if any statement above holds, then the algebra map therein is an injection.
    \end{lemma}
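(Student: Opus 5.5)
The plan is to prove the primed statements equivalent among themselves by repeating the argument of Lemma~\ref{Lem:PowerSeriesCriteria1} with $\widehat{\Ocal_{X,o}}$ replaced by $\Ocal_{X,o}^h$. Then we connect the two lists through \ref{Lem:PowerSeriesCriteria2.1}$\Leftrightarrow$\ref{Lem:PowerSeriesCriteria1.1}, using Property~\ref{lem:PropertesOfHenselization}\ref{lem:PropertesOfHenselization4} and \ref{lem:PropertesOfHenselization6}.

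First, the internal equivalences. The four proofs in Lemma~\ref{Lem:PowerSeriesCriteria1} used only three facts about the target ring $R=\widehat{\Ocal_{X,o}}$:
\begin{itemize}
\item $R$ is a domain containing $\Ocal_{X,o}$, so that kernels of $\Ocal_{X,o}$-algebra maps lie over $(0)$;
\item $R$ is integrally closed in $\Frac(R)$;
\item the fraction field of $R$ contains $K$.
\end{itemize}
By Property~\ref{lem:PropertesOfHenselization}\ref{lem:PropertesOfHenselization3}, $\Ocal_{X,o}^h$ is a regular local ring, hence a normal domain. By \ref{lem:PropertesOfHenselization4}, it contains $\Ocal_{X,o}$, and $K\subseteq\Frac(\Ocal_{X,o}^h)$. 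So the same arguments give \ref{Lem:PowerSeriesCriteria2.1}$\Leftrightarrow$\ref{Lem:PowerSeriesCriteria2.2}$\Leftrightarrow$\ref{Lem:PowerSeriesCriteria2.3}$\Leftrightarrow$\ref{Lem:PowerSeriesCriteria2.4}, and they also give injectivity of the maps.

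Second, the bridge. For \ref{Lem:PowerSeriesCriteria2.1}$\Rightarrow$\ref{Lem:PowerSeriesCriteria1.1}, compose with the injection $\Ocal_{X,o}^h\hookrightarrow\widehat{\Ocal_{X,o}}$ of \ref{lem:PropertesOfHenselization4}. This is an $\Ocal_{X,o}$-algebra map, so roots of $\mu$ go to roots of $\mu$. By injectivity, all $d$ roots of $\mu$ over $K$ are then realized in $\widehat{\Ocal_{X,o}}$.

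For the converse, take a root $\lambda\in\widehat{\Ocal_{X,o}}$ of $\mu$. Since $\mu$ is monic over $\Ocal_{X,o}$, $\lambda$ is integral over $\Ocal_{X,o}$ and algebraic over $K$. It must be placed inside $E$ compatibly with the fixed embedding of $E$ into a separable closure of $\Frac(\widehat{\Ocal_{X,o}})$. We are in characteristic $0$, so $\lambda$ is separable and the algebraic closure of $K$ in $\Frac(\widehat{\Ocal_{X,o}})$ lies in $E$. Hence $\lambda\in\overline{\Ocal_{X,o}}^E\cap\widehat{\Ocal_{X,o}}$, which equals $\Ocal_{X,o}^h$ by the first equality of \ref{lem:PropertesOfHenselization6}. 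Applying this to every root yields \ref{Lem:PowerSeriesCriteria2.1}.

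I expect the main obstacle to be this identification step: making precise that an element of $\widehat{\Ocal_{X,o}}$ algebraic over $K$ is literally an element of the chosen $E$. To handle it, I would invoke the convention in the Notation that $E$ sits inside a separable closure of $\Frac(\widehat{\Ocal_{X,o}})$. A separable closure contains every element separable-algebraic over $K$ that lies in the bigger field, so the intersection statements of \ref{lem:PropertesOfHenselization6} apply directly to such elements. Everything else is a formal transfer of Lemma~\ref{Lem:PowerSeriesCriteria1}.
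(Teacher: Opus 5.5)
Your proof is correct, but it is organized differently from the paper's. The paper does not re-run Lemma~\ref{Lem:PowerSeriesCriteria1} with target $\Ocal_{X,o}^h$. Instead it proves the four parallel equivalences $(k')\Leftrightarrow(k)$, $k=1,\dots,4$, one by one. The directions $(k')\Rightarrow(k)$ come from composing with the injection $\Ocal_{X,o}^h\hookrightarrow\widehat{\Ocal_{X,o}}$ of Property~\ref{lem:PropertesOfHenselization}\ref{lem:PropertesOfHenselization4}. For the directions $(k)\Rightarrow(k')$, it uses \ref{lem:PropertesOfHenselization5} and \ref{lem:PropertesOfHenselization6} to see that the roots, the image of $L$, and the images of $\Ocal_{X,o}[\ybf]/Q$ and $\overline{\Ocal_{X,o}[\ybf]/Q}$ already lie in $\Ocal_{X,o}^h$ or $\Frac(\Ocal_{X,o}^h)$, so the maps of Lemma~\ref{Lem:PowerSeriesCriteria1} factor through the primed ones.

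You instead observe that the proof of Lemma~\ref{Lem:PowerSeriesCriteria1} only uses that the target is a normal domain containing $\Ocal_{X,o}$, which $\Ocal_{X,o}^h$ is by \ref{lem:PropertesOfHenselization3} and \ref{lem:PropertesOfHenselization4}. So the primed statements are equivalent among themselves, and you need only one bridge, \ref{Lem:PowerSeriesCriteria1.1}$\Leftrightarrow$\ref{Lem:PowerSeriesCriteria2.1}. That bridge uses only the equality $\overline{\Ocal_{X,o}}^E\cap\widehat{\Ocal_{X,o}}=\Ocal_{X,o}^h$.

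Your route isolates the one non-formal input. It also makes explicit an embedding point the paper passes over: a root lying in $\widehat{\Ocal_{X,o}}$ is an element of the chosen $E$. You derive this correctly from the convention that $E$ is the separable closure of $K$ inside a separable closure of $\Frac(\widehat{\Ocal_{X,o}})$, in characteristic $0$. The cost is re-checking the incomparability arguments for injectivity over the new target.

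The paper's route gets injectivity of the primed maps for free, since their composites with $\Ocal_{X,o}^h\hookrightarrow\widehat{\Ocal_{X,o}}$ are injective by Lemma~\ref{Lem:PowerSeriesCriteria1}. In exchange, it draws on more of \ref{lem:PropertesOfHenselization6}, for instance also $E\cap\Frac(\widehat{\Ocal_{X,o}})=\Frac(\Ocal_{X,o}^h)$ for the field-level statement.
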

    
    \begin{proof}
        %There is a canonical $\Ocal_{X,o}$-algebra map $\Ocal_{X,o}^h\rightarrow\widehat{\Ocal_{X,o}}$ induced by the canonical $\Ocal_{X,o}$-algebra isomorphism $$\Ocal_{X,o}/\mfrak_{X,o}^e\cong\Ocal_{X,o}^h/(\mfrak_{X,o}\Ocal_{X,o}^h)^e\cong\widehat{\Ocal_{X,o}}/(\mfrak_{X,o}\widehat{\Ocal_{X,o}})^e$$ for any integer $e$. Since $\Ocal_{X,o}$ is a regular local domain of dimension $m$, so also is $\Ocal_{X,o}^h$, which implies that the $\Ocal_{X,o}$-algebra map $\Ocal_{X,o}^h\rightarrow\widehat{\Ocal_{X,o}}$ is an injection. 
        Combine the $\Ocal_{X,o}$-algebra maps in \ref{Lem:PowerSeriesCriteria2.2} and \ref{Lem:PowerSeriesCriteria2.4} with the $\Ocal_{X,o}$-algebra map $\Ocal_{X,o}^h\rightarrow\widehat{\Ocal_{X,o}}$ in Property \ref{lem:PropertesOfHenselization} \ref{lem:PropertesOfHenselization4}, one sees that \ref{Lem:PowerSeriesCriteria2.1}$\Rightarrow$\ref{Lem:PowerSeriesCriteria1.1}, \ref{Lem:PowerSeriesCriteria2.2}$\Rightarrow$\ref{Lem:PowerSeriesCriteria1.2}, \ref{Lem:PowerSeriesCriteria2.3}$\Rightarrow$\ref{Lem:PowerSeriesCriteria1.3}, and \ref{Lem:PowerSeriesCriteria2.4}$\Rightarrow$\ref{Lem:PowerSeriesCriteria1.4}; Property \ref{lem:PropertesOfHenselization} \ref{lem:PropertesOfHenselization5} and \ref{lem:PropertesOfHenselization6} imply that the roots of $\mu(t)$ in $\widehat{\Ocal_{X,o}}$ are all in $\Ocal_{X,o}^h$, and the maps in \ref{Lem:PowerSeriesCriteria1.1}, \ref{Lem:PowerSeriesCriteria1.2}, \ref{Lem:PowerSeriesCriteria1.3} and \ref{Lem:PowerSeriesCriteria1.4} factor though the corresponding maps in \ref{Lem:PowerSeriesCriteria2.1}, \ref{Lem:PowerSeriesCriteria2.2}, \ref{Lem:PowerSeriesCriteria2.3} and \ref{Lem:PowerSeriesCriteria2.4}, respectively. Hence \ref{Lem:PowerSeriesCriteria1.1}$\Rightarrow$\ref{Lem:PowerSeriesCriteria2.1}, \ref{Lem:PowerSeriesCriteria1.2}$\Rightarrow$\ref{Lem:PowerSeriesCriteria2.2}, \ref{Lem:PowerSeriesCriteria1.3}$\Rightarrow$\ref{Lem:PowerSeriesCriteria2.3}, and \ref{Lem:PowerSeriesCriteria1.4}$\Rightarrow$\ref{Lem:PowerSeriesCriteria2.4}.
    \end{proof}

    Note that $\Frac(\Ocal_{X,o}^h)=E^D$ is in general an infinite algebraic extension field of $K=\Frac(\Ocal_{X,o})$, the criteria in Lemma \ref{Lem:PowerSeriesCriteria2} are still incapable in the sense of computation. Hence, we need criteria that describe the field extension $K\subseteq L$ or the integral extension $\Ocal_{X,o}\subseteq\overline{\Ocal_{X,o}}^L$ more explicitly. To establish the next part of the criteria for the total split of $\mu(t)$ in $\widehat{\Ocal_{X,o}}$, we need to review several types of morphism in algebraic geometry.

\begin{notation}
Let $\Afrak$ and $\Bfrak$ be Noetherian commutative rings with unity, and let
\[
\phi \colon \Afrak \to \Bfrak
\]
be a ring map. Let $\pfrak \subset \Afrak$ and $\qfrak \subset \Bfrak$ be primes such that
\[
\pfrak = \phi^{-1}(\qfrak).
\]
%We use the following terminology for $\phi$, 
The ring map $\phi$ is called
\begin{enumerate}[label=(\Alph*), itemsep=1ex]
    \item \emph{of finite type}, if $\phi$ factors as
    \[
    \Afrak \hookrightarrow \Afrak[z_1,\dots,z_N] \twoheadrightarrow \Bfrak
    \]
    for some $N \in \mathbb{N}$ and variables $z_1,\dots,z_N$;

    \item \emph{finite}, if $\Bfrak$ is a finite $\Afrak$-module;

    \item \emph{flat}, if $\Bfrak$ is a flat $\Afrak$-module;

    \item \emph{flat at $\qfrak$}, if $\Bfrak_{\qfrak}$ is a flat $\Afrak_{\pfrak}$-module;

    \item \emph{unramified at $\qfrak$}, if $\phi$ is of finite type,
    \[
    \phi(\pfrak)\Bfrak_{\qfrak} = \qfrak \Bfrak_{\qfrak},
    \]
    and the induced residue field extension $\kappa(\qfrak)/\kappa(\pfrak)$ is finite separable;

    \item \emph{unramified}, if $\phi$ is of finite type and unramified at every prime $\Qfrak \subset \Bfrak$;

    \item \emph{\'etale at $\qfrak$}, if $\phi$ is flat at $\qfrak$ and unramified at $\qfrak$;

    \item \emph{\'etale}, if $\phi$ is \'etale at every prime $\Qfrak \subset \Bfrak$.
\end{enumerate}
\end{notation}

    Now we can state and prove the following criteria.

    \begin{lemma}[continuing Lemma \ref{Lem:PowerSeriesCriteria1} and \ref{Lem:PowerSeriesCriteria2}]\label{Lem:PowerSeriesCriteria3}
        The following statements are equivalent to the statements in Lemma \ref{Lem:PowerSeriesCriteria1} and Lemma \ref{Lem:PowerSeriesCriteria2}:
        \begin{enumerate}
            %\item All the roots of $\mu(t)$ over $K$ belong to $\widehat{\Ocal_{X,o}}$;

            \item[(u')]\label{u'} $\Ocal_{X,o}\rightarrow\overline{\Ocal_{X,o}}^{L}$ is unramified at some maximal $\nfrak\subset\overline{\Ocal_{X,o}}^{L}$;

            \item[(u)] $\Ocal_{X,o}\rightarrow\overline{\Ocal_{X,o}}^{L}$ is unramified;

            \item[(e')] $\Ocal_{X,o}\rightarrow\overline{\Ocal_{X,o}}^{L}$ is {\'e}tale at some maximal $\nfrak\subset\overline{\Ocal_{X,o}}^{L}$;

            \item[(e)] $\Ocal_{X,o}\rightarrow\overline{\Ocal_{X,o}}^{L}$ is {\'e}tale;

            %\item[(7)] $\Ocal_{X,o}\rightarrow\overline{\Ocal_{X,o}}^{L}$ is smooth at some maximal $\nfrak\subset\overline{\Ocal_{X,o}}^{L}$;

            %\item[(7')] $\Ocal_{X,o}\rightarrow\overline{\Ocal_{X,o}}^{L}$ is smooth.
        \end{enumerate}
    \end{lemma}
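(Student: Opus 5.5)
We prove the chain of equivalences in the order (e)$\Rightarrow$(e')$\Rightarrow$(u') and (e)$\Rightarrow$(u)$\Rightarrow$(u'), then close the loop with (u')$\Rightarrow$\ref{Lem:PowerSeriesCriteria2.3}$\Rightarrow$(e). Throughout, write $B\coloneq\overline{\Ocal_{X,o}}^L$; it is a finite $\Ocal_{X,o}$-algebra because $\Ocal_{X,o}$ is excellent, so the finite type hypothesis in the unramified/étale definitions holds. The implications (e)$\Rightarrow$(e')$\Rightarrow$(u') and (e)$\Rightarrow$(u)$\Rightarrow$(u') are immediate from the definitions.

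For (u')$\Rightarrow$\ref{Lem:PowerSeriesCriteria2.3}, we first upgrade (u') to (e'). Since $\Ocal_{X,o}$ is regular, hence normal, and $B$ is a normal domain finite over it with $K\subseteq L$ separable, the purity/``unramified $\Rightarrow$ étale'' principle applies: a finite, torsion-free extension of a normal local domain that is unramified at $\nfrak$ is flat at $\nfrak$. Concretely, $B_\nfrak$ is then a local ring with maximal ideal $\mfrak_{X,o}B_\nfrak$ and is finite over the regular ring $\Ocal_{X,o}$ after localization, so $B_\nfrak$ is regular of dimension $m$. A finite extension of regular local rings of the same dimension is Cohen--Macaulay over a regular base, hence flat by the miracle flatness (Matsumura 23.1). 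So $\Ocal_{X,o}\to B_\nfrak$ is étale with trivial residue field extension, since $\kappa(\mfrak_{X,o})=\CC$ is algebraically closed. By Property~\ref{lem:PropertesOfHenselization}\ref{lem:PropertesOfHenselization2}, $\Spec B$ is then an étale neighborhood of $o$ with trivial residue extension at $\nfrak$. Hence there is an $\Ocal_{X,o}$-algebra map $B\to B_\nfrak\to\Ocal_{X,o}^h$, which gives \ref{Lem:PowerSeriesCriteria2.4}, and so \ref{Lem:PowerSeriesCriteria2.3} by Lemma~\ref{Lem:PowerSeriesCriteria2}.

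For \ref{Lem:PowerSeriesCriteria2.3}$\Rightarrow$(e), fix a $K$-embedding $L\hookrightarrow E^D=\Frac(\Ocal_{X,o}^h)$ (Property~\ref{lem:PropertesOfHenselization}\ref{lem:PropertesOfHenselization5}). Identify $L$ with a subfield of $E$ contained in $E^D$, so that $\Gal(E|L)\supseteq D$. The maximal ideals of $B$ are the contractions $\sigma(\mfrak')\cap L$ for $\sigma\in G$. Since $L|K$ is Galois, $\Gal(E|L)$ is normal in $G$, so $\Gal(E|L)\supseteq\sigma D\sigma^{-1}=D(\sigma\mfrak'|\mfrak_{X,o})$ for every $\sigma$. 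Thus every maximal ideal of $B$ has $L$ inside its decomposition field. Standard ramification theory then gives trivial ramification and residue extension at each $\nfrak=\sigma\mfrak'\cap L$. Explicitly, $B_\nfrak$ is a localization of $\overline{\Ocal_{X,o}}^D$-type rings and embeds into $\Ocal_{X,o}^h$ with $\mfrak_{X,o}B_\nfrak=\nfrak B_\nfrak$. Alternatively, argue for one $\nfrak$ and transport to the others by $\Gal(L|K)$, which acts transitively on maximal ideals of $B$ and preserves being unramified. So $B$ is unramified at every maximal ideal, hence at every prime: the unramified locus is open and contains all closed points of the semilocal $\Spec B$. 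Flatness at every prime follows as above, giving (e).

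The main obstacle is the step showing that $L\subseteq E^D$ forces $\mfrak_{X,o}B_\nfrak=\nfrak B_\nfrak$, i.e.\ that the decomposition field is unramified at $\mfrak'$. I would deduce it from $\Ocal_{X,o}^h=(\overline{\Ocal_{X,o}}^D)_{(\mfrak')^D}$ having maximal ideal $\mfrak_{X,o}\Ocal_{X,o}^h$ (Property~\ref{lem:PropertesOfHenselization}\ref{lem:PropertesOfHenselization1}), together with faithful flatness of $B_\nfrak\to\Ocal_{X,o}^h$. The latter is a local injection of normal domains with the same residue field, via the limit description \ref{lem:PropertesOfHenselization2}, which descends the equality of maximal ideals.
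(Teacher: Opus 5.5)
Most of your argument follows the paper. You go (u')$\Rightarrow$(e')$\Rightarrow$(4') through the universal property of the Henselization, and the converse direction through the decomposition group, with Galois transitivity and openness of the unramified locus to reach every prime. Replacing the paper's citation of Milne (Ch.~I, Thm.~3.20) by ``$B_\nfrak$ is regular, then miracle flatness'' is a valid, more self-contained alternative. One small correction there: $B_\nfrak$ is not finite over $\Ocal_{X,o}$. The equality $\dim B_\nfrak=m$, which regularity needs, should be justified by going-down over the normal base.

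The gap is the step you flag as the main obstacle: that $L\subseteq E^D$ forces $\mfrak_{X,o}B_\nfrak=\nfrak B_\nfrak$ with trivial residue extension. ``Standard ramification theory'' is not a proof of this in dimension $m>1$, and the justification you give for faithful flatness of $B_\nfrak\to\Ocal^h_{X,o}$ fails, for two reasons.
\begin{enumerate}
\item A local injection of normal local domains with the same residue field need not be flat. For example, the blow-up chart $\CC[x,y]_{(x,y)}\to\CC[x,y/x]_{(x,y/x)}$ has closed fibre $\CC[y/x]_{(y/x)}$ of dimension $1$, while both rings have dimension $2$.
\item Property~\ref{lem:PropertesOfHenselization}(ii) only presents $\Ocal^h_{X,o}$ as a colimit of \'etale neighbourhoods of $\Ocal_{X,o}$. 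That says nothing about flatness of $B_\nfrak\to\Ocal^h_{X,o}$ unless $B_\nfrak$ is already known to be \'etale, which is exactly what you are proving. The argument is circular.
\end{enumerate}

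The paper handles this step differently. From the exact sequence of decomposition groups for $E|L|K$ it reads off $D((\mfrak')^L|\mfrak_{X,o})=1$. It then invokes Nagata's Theorem~41.2 to conclude that $\mfrak_{X,o}$ generates the maximal ideal of the localization and the residue extension is trivial.

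If you want to keep your faithful-flatness route, justify flatness correctly. Since $D\subseteq\Gal(E|L)$, the decomposition group of $\mfrak'$ over $\nfrak$ is $D$ itself. The decomposition-field description of the Henselization holds for any normal local domain; applied to $B_\nfrak$, it gives $(B_\nfrak)^h=\Ocal^h_{X,o}$. Henselization is faithfully flat, so your descent of $\nfrak\Ocal^h_{X,o}=\mfrak_{X,o}\Ocal^h_{X,o}$ then goes through.

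A second fix avoids Henselization entirely. The map $B\to\widehat{\Ocal_{X,o}}$ from Lemma~\ref{Lem:PowerSeriesCriteria1}(4) induces a surjection $\widehat{B_\nfrak}\to\widehat{\Ocal_{X,o}}$ of $m$-dimensional complete local rings. Here $\widehat{B_\nfrak}$ is a domain because $B_\nfrak$ is excellent and normal, so the surjection is an isomorphism. Then $\nfrak B_\nfrak=\mfrak_{X,o}B_\nfrak$ follows by faithful flatness of completion.
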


    \begin{proof}
        (u)$\Rightarrow$(u') and (e)$\Rightarrow$(e') are clear, since it is a local property for a ring map to be unramified or to be {\'e}tale. 
        
        (e')$\Rightarrow$(u') and (e)$\Rightarrow$(u) are well known, since being (locally) {\'e}tale is equivalent to being both (locally) flat and (locally) unramified. 
        
        (u')$\Rightarrow$(e') and (u)$\Rightarrow$(e) are implied by \cite[Chapter 1, Theorem 3.20]{Milne_1980}.

        (u')$\Rightarrow$(u) and (e')$\Rightarrow$(e): Since $\Gal(L|K)$ transitively acts on the maximals of $\overline{\Ocal_{X,o}}^{L}$, being unramified or being {\'e}tale at any maximal will, respectively, imply the unramifiedness or the {\'e}taleness at every maximal. 

        \ref{Lem:PowerSeriesCriteria2.3}$\Rightarrow$(u'): %We shall show (3')$\Rightarrow$(u'). 
        Let $(\mfrak')^L=(\mfrak'\cap E^D)\cap L$ be the maximal of $\overline{\Ocal_{X,o}}^{L}$. \ref{Lem:PowerSeriesCriteria2.3} implies that $L$ is fixed by $D=D(\mfrak'|\mfrak_{X,o})$. Hence we have $D\subseteq\Gal(E|L)$ and $D(\mfrak'|(\mfrak')^L)=D$.
        
         The canonical short exact sequence of decomposition groups for Galois extensions $E|L|K$ %with maximals $\mfrak'|(\mfrak')^L|\mfrak_{X,o}$
        \[1 \rightarrow D(\mfrak'|(\mfrak')^L) \xrightarrow{\Id} D(\mfrak'|\mfrak_{X,o}) \rightarrow D((\mfrak')^L|\mfrak_{X,o}) \rightarrow 1\]
 implies that $D((\mfrak')^L|\mfrak_{X,o})=1$, whence the subfield of $L$ fixed by $D((\mfrak')^L|\mfrak_{X,o})$ is $L$ itself. According to  \cite[Theorem 41.2]{Nagata_1962}, the maximal ideal $\mfrak_{X,o}$ of $\Ocal_{X,o}$ generates the maximal ideal of the local ring $(\overline{\Ocal_{X,o}}^{L})_{(\mfrak')^L}$, and the extension of the residue field $\kappa((\mfrak')^L)|\kappa(\mfrak_{X,o})$ is trivial. Since $\Ocal_{X,o}$ is Noetherian and $L|K$ is finite separable, the integral closure $\overline{\Ocal_{X,o}}^{L}$ of $\Ocal_{X,o}$ in $L$ is well-known to be a finite $\Ocal_{X,o}$-module \cite[Corollary 10.16]{Nagata_1962}, which implies that $\overline{\Ocal_{X,o}}^{L}$ is of finite type over $\Ocal_{X,o}$. Therefore, $\Ocal_{X,o}\rightarrow\overline{\Ocal_{X,o}}^{L}$ is unramified at $(\mfrak')^D$.

        (e')$\Rightarrow$\ref{Lem:PowerSeriesCriteria2.4}: Note that the residue fields extension $\kappa(\nfrak)|\kappa(\mfrak_{X,o})$ is trivial since $\kappa(\nfrak)|\kappa(\mfrak_{X,o})$ is finite separable and $\kappa(\mfrak_{X,o})\cong\CC$ is separably closed. Hence $(\overline{\Ocal_{X,o}}^{L},\nfrak)$ is an {\'e}tale neighborhood of $(\Ocal_{X,o},\mfrak_{X,o})$ with trivial residue field extension. Now Property \ref{lem:PropertesOfHenselization} \ref{lem:PropertesOfHenselization2} implies that there is a canonical $\Ocal_{X,o}$-algebra map $\overline{\Ocal_{X,o}}^{L}\rightarrow\Ocal_{X,o}^h$ satisfying $\mfrak_{X,o}^h\cap\overline{\Ocal_{X,o}}^{L}=\nfrak$, which verifies that (e')$\Rightarrow$\ref{Lem:PowerSeriesCriteria2.4}.  
    \end{proof}

    %The lemma \ref{Lem:PowerSeriesCriteria3} induces the following criterion based on the Jacobian matrix of any finite presentation of $\overline{\Ocal_{X,o}}^{L}$ over $\Ocal_{X,o}$. 

    %The Lemma~\ref{Lem:PowerSeriesCriteria3} implies the following criterion, formulated in terms of the Jacobian matrix associated with any finite presentation of $\overline{\Ocal_{X,o}}^{L}$ over $\Ocal_{X,o}$.

    Now using Lemma \ref{Lem:PowerSeriesCriteria1}-\ref{Lem:PowerSeriesCriteria3} we can give the proof for Theorem \ref{Thm:PowerSeriesCriteria4Jacobian}. 

    \begin{proof}[Proof of Theorem \ref{Thm:PowerSeriesCriteria4Jacobian}]
        The inequality $D\geqslant N$ follows from the dimension inequality and the fact that $\overline{\Ocal_{X,o}}^{L}$ is an integral extension of $\Ocal_{X,o}$. Note that Lemma  \ref{Lem:PowerSeriesCriteria1}-\ref{Lem:PowerSeriesCriteria3} have established (1) $\Leftrightarrow$ (u'). Denote $\Afrak=\Ocal_{X,o}$ and $\Bfrak=\overline{\Ocal_{X,o}}^{L}$.
         Then $\overline{\Ocal_{X,o}}^{L}$ is unramified over $\Ocal_{X,o}$ at $\nfrak$ if and only if the module of Kähler differentials $\Omega_{\Bfrak/\Afrak}$ vanishes at $\nfrak$. This is equivalent to the condition that the canonical $\Bfrak$-module map
\begin{equation*}
    Q'/(Q')^2 \longrightarrow \Omega_{\Afrak[z_1,\dots,z_N]/\Afrak}\otimes_{\Afrak}\Bfrak, \quad f \mapsto \sum_{j=1}^N \frac{\partial f}{\partial z_j}\drm z_j
\end{equation*}
is surjective at $\nfrak$.
Finally, this holds if and only if the Jacobian matrix, which defines the linear map
%defined by the Jacobian matrix
%if and only if the Jacobian map defined by
\begin{equation*}
\begin{aligned}
    J: \bigoplus_{k=1}^D \Bfrak f_k &\longrightarrow \bigoplus_{j=1}^N \Bfrak \drm z_j = \Omega_{\Afrak[\zbf]/\Afrak}\otimes_{\Afrak}\Bfrak, \\
    (b_k f_k)_k &\longmapsto \sum_{j=1}^{N} \left( \sum_{k=1}^{D} b_k \frac{\partial f_k}{\partial z_j} \right) \drm z_j,
\end{aligned}
\end{equation*}
has full rank $N$ (equivalently, is surjective) at $\nfrak$.
\end{proof}
 
\begin{comment}
 Then $\overline{\Ocal_{X,o}}^{L}$ is unramified over $\Ocal_{X,o}$ if and only if the module $\Omega_{\Bfrak/\Afrak}$ of K{\"a}hler differentials of $\overline{\Ocal_{X,o}}^{L}$ over $\Ocal_{X,o}$ is zero at $\nfrak$, if and only if the canonical $\Bfrak$-module map 
       \begin{equation*}
Q'/(Q')^2\rightarrow\Omega_{\Afrak[z_1,\dots,z_N]/\Afrak}\otimes_{\Afrak}\Bfrak,~f \mapsto \sum_k\frac{\partial f}{\partial z_k}\drm z_k
        \end{equation*}
        is surjective at $\nfrak$, if and only if the Jacobian matrix 
        \begin{equation*}
            \frac{\partial(f_1,\dots,f_D)}{\partial(z_1,\dots,z_N)}:\bigoplus_{k=1}^D{\Bfrak f_k}\rightarrow\bigoplus_{j=1}^N{\Bfrak\drm z_k}
        \end{equation*} 
        is of full rank $N$ (equivalently, surjective) at $\nfrak$.  
\end{comment}

   Let $\mu(t)\in\Ocal_{X,o}[t]$ be a monic square-free polynomial. 
    We present Algorithm \ref{alg:factor} to determine whether    $\mu(t)\in\Ocal_{X,o}[t]$ is completely split over $\widehat{\mathcal{O}_{X,o}}$.
    
    \begin{algorithm}[htbp]
\caption{Determine Whether a Polynomial is Completely split   over \(\widehat{\mathcal{O}_{X,o}}\)} \label{alg:factor}
    
		%\caption{Algorithm for Whether a Polynomial Totally Splits on \(\widehat{\Ocal_{X,o}}\)}

     %   \caption{Criterion for the Total Splitting of Polynomials over \(\widehat{\mathcal{O}_{X,o}}\)}

%\KwIn{Defining ideal $\mathbf{I}(X) \subseteq \CC[\xbf]$ of a variety $X$ (with a regular point  $o \in X$); a monic polynomial $\mu(t) \in \Ocal_{X,o}[t]$ of degree $d$, represented by coefficients $c_k = F_k/G_k$ where $G_k(o) \neq 0$.}

  \KwIn{Polynomial ring $\mathbb{C}[\xbf]$ with the defining ideal $\mathbf{I}(X)\subseteq\xbf\CC[\xbf]$ of the variety $X$,
        a monic polynomial \(\mu(t)=t^d+c_1t^{d-1}+\cdots+c_d\), %\(c_k\in\Ocal_{X,o}\), 
        \(c_k=F_k/G_k\mod\Ibf(X)\), $F_k, G_k\in\mathbb{C}[\xbf]$ and \(G_k(o)\neq0\). }
      %  a monic polynomial $\mu(t) \in \Ocal_{X,o}[t]$ of degree $d$, represented by coefficients $c_k = F_k/G_k$ where $G_k(o) \neq 0$.}
\KwOut{\textbf{True} if $\mu(t)$ splits completely over $\widehat{\mathcal{O}_{X,o}}$ (i.e., $\mu(t)=\prod_{k=1}^d(t-\lambda_k)$ for some $\lambda_k \in \widehat{\mathcal{O}_{X,o}}$), together with defining ideals $Q', \nfrak \subseteq \CC[\xbf,\ybf,\wbf]$; otherwise \textbf{False}.}
        
       % \KwIn{Polynomial ring $\mathbb{C}[\xbf]$ with the defining ideal $\mathbf{I}(X)\subseteq\xbf\CC[\xbf]$ of the variety $X$,
        %a monic polynomial \(\mu(t)=t^d+c_1t^{d-1}+\cdots+c_d\), %\(c_k\in\Ocal_{X,o}\), 
       % \(c_k=F_k/G_k\mod\Ibf(X)\), $F_k, G_k\in\mathbb{C}[\xbf]$ and \(G_k(o)\neq0\). 
      %  a monic polynomial $\mu(t) \in \Ocal_{X,o}[t]$ of degree $d$, represented by coefficients $c_k = F_k/G_k$ where $G_k(o) \neq 0$.}

		%\KwOut{\textbf{True} if there exist \(\lambda_k\in\widehat{\mathcal{O}_{X,0}},k=1,\dots,d\) satisfying \(\mu=\prod_{k=1}^d(t-\lambda_k)\), ideals \(Q',\nfrak\) in \(\CC[\xbf,\ybf,\wbf]\); \textbf{False} otherwise. }
        
        \SetAlgoLined
		\begin{enumerate}[label=\arabic*.]\upshape
			\item Define the ideal
            \[J \coloneq \langle\, e_{k}(\mathbf{y}) \cdot G_{k} - (-1)^{k} F_{k} \mid k=1,\dots,d \,\rangle \subset \mathbb{C}[\mathbf{x}, \mathbf{y}],\]
            where the \(e_{k}\) denote the elementary symmetric polynomials in the variables \(\mathbf{y}=(y_{1},\dots,y_{d})\).
            \item Select an associated prime $Q\in\Spec(\CC[\xbf,\ybf])$ of the ideal \(J+\Ibf(X)\CC[\xbf,\ybf]\) satisfying \(Q\cap\CC[\xbf]\subseteq\xbf\CC[\xbf]\).
            %\item Take the associated primes of the ideal \(\Ibf(X)+J\): $\Ass(\CC[\xbf,\ybf]/(\Ibf(X)+J))=\{Q_1,\dots,Q_s\}$.
            %\item Choose one \(Q_j\) in \(Q_1,\dots,Q_s\) satisfying that \(Q_j\cap\CC[\xbf]\subseteq\xbf\CC[\xbf]\). Denote by \(Q=Q_j\). 
            %Find the generators \(g_1,\dots,g_D\) of \(Q\).

\item Compute an affine presentation of the integral closure of $\CC[\xbf,\ybf]/Q$ in its fraction field, denoted by $\CC[\xbf,\ybf,\wbf]/Q'$, introducing new variables $\wbf=(w_1,\dots,w_e)$.  Compute a generating set $g_1,\dots,g_D$ for the ideal $Q'$, observing that necessarily $D \geqslant d+e$.   
         %   \item Compute an affine presentation of the integral closure of the quotient ring $\CC[\xbf,\ybf]/Q$ in its fraction field, say $\CC[\xbf,\ybf,\wbf]/Q'$, with new variables $\wbf=(w_1,\dots,w_e)$. Compute a list of generators \(g_1,\dots,g_D\) for $Q'$; here necessarily \(D \geqslant d+e\).

            \item Find an associated prime $\mathfrak{n}\in\Spec(\CC[\xbf,\ybf,\wbf])$ of the ideal $Q'+\xbf\CC[\xbf,\ybf,\wbf]$ that satisfies $\mathfrak{n}\cap\CC[\xbf]\subseteq\xbf\CC[\xbf]$. 
            Note that any such prime $\mathfrak{n}$ is necessarily a maximal ideal.
           % Such $\mathfrak{n}$ is necessarily a maximal ideal of $\CC[\xbf,\ybf,\wbf]$.
           
            \item Calculate the Jacobian matrix of \(g_1,\dots,g_D\) with respected to \(\ybf,\wbf\) modulo \(\nfrak\): 
            \[
                \frac{\partial(g_1,\dots,g_D)}{\partial(\ybf,\wbf)}(\mod\nfrak)\in(\CC[\xbf,\ybf,\wbf]/\nfrak)^{D\times (d+e)}.
            \]
If the Jacobian matrix modulo $\nfrak$ has full rank $d+e$, return \textbf{True} and  the ideals $Q'$, $\nfrak$; otherwise, return \textbf{False}.
            
            %If the Jacobian matrix (\(\mod\nfrak\)) is of full rank $(d+e)$, return \textbf{True} and ideals \(Q',\nfrak\); otherwise, return \textbf{False}.
		\end{enumerate}
	\end{algorithm}

    \begin{remark}
    By the Implicit Function Theorem, we can apply Newton-Hensel iteration to the functions $g_1,\dots,g_D$ using the full-rank Jacobian matrix from Step~5. 
    This enables the computation of the power series expansions of $\ybf$ and $\wbf$ modulo $Q'$ to any desired precision.

       % According to the Implicit Function Theorem, one can apply the Newton-Hensel iteration on $g_1,\dots,g_D$ and their full-rank Jacobian matrix in step 5 to compute the power series $\ybf,\wbf\mod Q'$ in any given precise. 
    \end{remark}

    \begin{remark}
    In practice, if the variety $X$ is defined over a subfield $F \subseteq \CC$ (such as $\QQ$ or a number field), Algorithm~\ref{alg:factor} can be executed entirely over $F$. Specifically, one can replace $\CC$ with $F$ in the coefficient fields of the polynomial rings and compute all associated primes over $F$. This adaptation is justified by the following theoretical parallels:
    \begin{enumerate}
        \item \textbf{Regularity descends:} For an $F$-variety $Y$ and a closed point $p \in Y$, $p$ is a regular point if and only if the points in the base change $Y_{\CC}$ lying over $p$ are regular.
        \item \textbf{Algebraic Generalization:} The results built on $\Ocal_{X,o}$ extend to $\Ocal_{Y,p}$ by replacing the Henselization $\Ocal_{X,o}^h$ with the \textit{strict} Henselization $\Ocal_{Y,p}^{sh}$.
        \item \textbf{Ramification Theory:} The role of the decomposition group $D(\mfrak'|\mfrak_{X,o})$ is assumed by the inertia group $I(\mfrak'|\mfrak_{Y,p})$.
    \end{enumerate}
    Consequently, one can develop analogues of Lemmas~\ref{Lem:PowerSeriesCriteria1}--\ref{Lem:PowerSeriesCriteria3} in this arithmetic setting, ultimately recovering the same Jacobian criterion (Theorem~\ref{Thm:PowerSeriesCriteria4Jacobian}) over the field $F$.    
    \end{remark}

    %In practice, if the defining polynomials of $X$ are over a subfield $F$ of $\CC$, say the rational field $\QQ$ or any number field (a finite extension of $\QQ$), then the coefficient field of the polynomial rings in Algorithm \ref{alg:factor} can be substituted with $F$ rather than $\CC$, and all the associated prime can be correspondingly taken over the field $F$. The reason is as follows. First of all, for an $F$-variety $Y$ with a closed point $p\in Y$, $p$ is a regular point of $Y$ if and only if the preimages of $p$ via the base change $Y_{\CC}\rightarrow Y$ are regular points of $Y_{\CC}$. Next, by replacing $\Ocal_{X,o}$ with $\Ocal_{Y,p}$, the Henselization $\Ocal_{X,o}^h$ with the strict Henselization $\Ocal_{X,o}^{sh}$, the decomposition group $D(\mfrak'|\mfrak_{X,o})$ with the inertia group $I(\mfrak'|\mfrak_{Y,p})$, one can parallel develop Lemma \ref{Lem:PowerSeriesCriteria1} and Lemma \ref{Lem:PowerSeriesCriteria2}, Lemma \ref{Lem:PowerSeriesCriteria3} using the ramification theory on $Y$, and eventually obtain the same Theorem \ref{Thm:PowerSeriesCriteria4Jacobian}. 

We present two examples to illustrate the Algorithm \ref{alg:factor}. All computations are performed in \textsf{Macaulay2} over the field \(\QQ\).

    \begin{example} 
        The minimal polynomial of the matrix
        \begin{equation*}
            A=\begin{pmatrix}
                -x_1 & x_2 & 0\\
                x_2 & -x_1 & \sqrt{2}x_2\\
                0 & \sqrt{2}x_2 &2x_1 
            \end{pmatrix}
        \end{equation*}
        is \(\mu_A(t)=t^3-3(x_1^2+x_2^2)t-2x_1^3\). This is the  hyperbolic polynomial given in \cite[Example 5.7]{MR2372149}. 
        Its \emph{Vieta's ideal} is
        \[        J=\ideal{y_{1}+y_{2}+y_{3},y_{1}y_{2}+y_{1}y_{3}+y_{2}y_{3}+3x_{1}^{2}+3x_{2}^{2},y_{1}y_{2}y_{3}-2x_{1}^{3}},
        \]
        which is prime in \(\QQ[\xbf,\ybf]\), so \(Q=J\).

        The integral closure of \(\QQ[\xbf,\ybf]/Q\) is given by \[\QQ[\xbf,\ybf,\wbf]/Q',\quad\wbf=(w_{0,0},w_{0,1}),\]
        where \(Q'=\<y_{1}+y_{2}+y_{3}, \  y_{2}^{2} +y_{2}y_{3}+y_{3}^{2}-3 x_{1}^{2}-3x_{2}^{2},\ w_{0,1}x_{2}-y_{2}y_{3}-y_{2}x_{1}-y_{3}x_{1}-x_{1}^{2},\ w_{0,1}y_{2}+w_{0,1}y_{3}-w_{0,1}x_{1}-3x_{1}x_{2},\ w_{0,0}x_{2}-y_{3}^{2}+y_{3}x_{1}+2x_{1}^{2},\ w_{0,0}y_{3}+w_{0,0}x_{1}-3y_{3}x_{2},\ w_{0,0}y_{2}+w_{0,0}x_{1}-w_{0,1}y_{3}+2w_{0,1}x_{1},\ w_{0,1}^{2}+3y_{2}y_{3}-3x_{1}^{2},\ w_{0,0}w_{0,1}-3y_{2}y_{3}-3y_{3}x_{1},\ w_{0,0}^{2}-3y_{3}^{2}+6y_{3}x_{1}\>\).

The Jacobian matrix of generators of \(Q'\) with respect to variables\(\ybf,\wbf\) at the point \(\nfrak\) is 
        \[\begin{pmatrix}
            0&0&0&0&0&0&0&0&0&0\\
            0&0&0&0&0&0&0&0&0&0\\
            1&0&0&0&0&0&0&0&0&0\\
            1&0&0&0&0&0&0&0&0&0\\
            1&0&0&0&0&0&0&0&0&0
        \end{pmatrix},\]
        which has rank \(1\). 
        Consequently, $\mu_A$ \emph{does not} split over $\widehat{\Ocal_{X,o}}$, and thus $A$ is not diagonalizable.
    \end{example}

%The following examples, computed using \textsf{Macaulay2}, illustrate the steps of Algorithm~\ref{alg:factor} and verify its correctness.

The following example, originally due to Rellich~\cite{rellich1937storungstheorie}, is taken from the survey by Parusi\'nski and Rainer~\cite[Example 2.8]{MR4890428}.

    \begin{example}\label{exp:rellich}\cite{rellich1937storungstheorie}
        Given the matrix 
        \begin{equation*}
            A=\begin{pmatrix}
                2x_1 & x_1+x_2 \\
                x_1+x_2 & 2x_2
            \end{pmatrix},
        \end{equation*}
        its minimal polynomial is
        \[\mu_A(t)=t^2+(-2x_1-2x_2)t-x_1^2+2x_1x_2-x_2^2.\]
The corresponding Vieta's ideal is
\[
    J = \ideal{y_{1}+y_{2}-2x_{1}-2x_{2},\ y_{1}y_{2}+x_{1}^{2}-2x_{1}x_{2}+x_{2}^{2}},
\]
which is prime in \(\QQ[\xbf,\ybf]\). We therefore denote $Q=J$.
        
 The integral closure of \(\QQ[\xbf,\ybf]/Q\) is simply \(\QQ[\xbf,\ybf]/Q'\) with \(Q'=J\) and the unique maximal ideal $\nfrak$ generated by $\xbf,\ybf$. The Jacobian matrix of the generators  of \(Q'\) with respect to \(\ybf\) is
        \[\begin{pmatrix}
                1&y_2\\
                1&y_1
            \end{pmatrix}\equiv\begin{pmatrix}
                1&0\\
                1&0
            \end{pmatrix}\ (\mod\nfrak),\]
      which has rank $1$ at $\nfrak$. Consequently, $\mu_A$ does {not} split over $\widehat{\Ocal_{X,o}}$.

        However, as pointed out in \cite{MR4890428}, performing the blow-up $x_1 \mapsto x_1x_2$ yields a matrix whose minimal polynomial splits completely.
        Let
        \begin{equation*}
            \tilde{A}=\begin{pmatrix}
                2x_1x_2 & x_1x_2+x_2 \\
                x_1x_2+x_2 & 2x_2
            \end{pmatrix},
        \end{equation*}
        Its minimal polynomial is
        \[\mu_{\tilde{A} }(t)=t^2+(-2x_1x_2-2x_2)t-x_1^2x_2^2+2x_1x_2^2-x_2^2,\]
        and the associated Vieta's ideal is
        \[J=\ideal{-2x_1x_2+y_1+y_2-2x_2,\ x_1^2x_2^2-2x_1x_2^2+y_1y_2+x_2^2}.\]
        The ideal \(J\) has an associated prime 
        \[Q=\<2x_{1}x_{2}-y_{1}-y_{2}+2x_{2},y_{1}^{2}+6y_{1}y_{2}+y_{2}^{2}-8y_{1}x_{2}-8y_{2}x_{2}+16x_{2}^{2}\>.\]
        The integral closure of \(\QQ[\xbf,\ybf]/Q\) is
        \[\QQ[\xbf,\ybf,w_{0,1}]/Q'.\]
        where \(Q'=\<2x_{1}x_{2}-y_{1}-y_{2}+2x_{2},\ y_{1}^{2}+6y_{1}y_{2}+y_{2}^{2}-8y_{1}x_{2}-8y_{2}x_{2}+16x_{2}^{2},\ w_{0,1}x_{2}+y_{2},\ 2w_{0,1}y_{2}+y_{1}x_{1}+5y_{2}x_{1}-3y_{1}+y_{2}+8x_{2},\ 2w_{0,1}y_{1}-y_{1}x_{1}-y_{2}x_{1}+3y_{1}+3y_{2}-8x_{2},\ w_{0,1}^{2}+2w_{0,1}x_{1}+2w_{0,1}-x_{1}^{2}+2x_{1}-1\>\).
        
        The maximal ideal \(\mathfrak{n}\) is  
\[
    \bigl\langle x_2,\; x_1,\; y_2,\; y_1,\; w_{0,1}^2+2w_{0,1}-1\bigr\rangle,
\]
and the Jacobian matrix of the generators of \(Q'\) with respect to \(\ybf\) and \(w_{0,1}\) at \(\mathfrak{n}\) is  
\[
    \begin{pmatrix}
        0&0&0&0&0&2\,w_{0,1}+2\\
        -1&0&0&-3&2\,w_{0,1}+3&0\\
        -1&0&1&2\,w_{0,1}+1&3&0
            \end{pmatrix},
\]
        This matrix has full rank in \(\QQ[\xbf,\ybf,w_{0,1}]/\nfrak\). Therefore, \(\mu_{\tilde{A}}\) completely splits in \(\widehat{\Ocal_{X,o}}\).
    \end{example}

\section{When are Spectral Projections in \(\Mat_n(\Cbf)\)?}\label{sec:diag}

Algorithm~\ref{alg:whether has proper divisor} decides whether a normal matrix $A \in \operatorname{Mat}_n(\mathcal{O}_{X,o})$ admits a unitary diagonalization; if so, it computes the solution.

%associated spectral projections. Furthermore, in the specific cases where $A$ is Hermitian or skew-Hermitian, the algorithm produces the full unitary diagonalization of $A$.

    \begin{algorithm}[htbp]
        \caption{Calculating the Unitary Diagonalization of a Normal Matrix} \label{alg:whether has proper divisor}
        \KwIn{A normal matrix \(A\in\operatorname{Mat}_n(\mathcal{O}_{X,o})\).} 
		\KwOut{If \(A\) is diagonalizable over \(\widehat{\mathcal{O}_{X,o}}\), return \textbf{Diagonalizable} together with a unitary matrix \(U\) and a diagonal matrix \(D\); otherwise \textbf{NOT Diagonalizable}.}
		
        \begin{enumerate}[label=\arabic*.]\upshape
            \item Compute the minimal polynomial \(\mu_A(t)\). Apply Algorithm \ref{alg:factor} to 
            \(\mu_A\). 
            \begin{itemize}
             \item    If the result is \textbf{False}, return \textbf{NOT Diagonalizable} and terminate.
              %  \item 
            % If the result if \textbf{True}, proceed.
             \end{itemize}
            \item Calculate the spectral projections in $\operatorname{Mat}_n(\Frac(\mathbb{C}[\xbf,\ybf,\wbf]/Q'))$:
            \[
                \Pi_k=P_k(A)\coloneq  \prod_{i\neq k}\frac{A-y_iI_n}{y_k-y_i}~(\mathrm{mod}~Q').
            \]
            \item For each triple \((k,i,j)\), write the entry \(p_{ij}^{(k)}\) of \(\Pi_k\) as a rational function
              \[
              p_{ij}^{(k)} = \frac{F_{ij}^{(k)}(\xbf,\ybf,\wbf)}{G_{ij}^{(k)}(\xbf,\ybf,\wbf)}(\mathrm{mod}~Q').
              \]
            Compute the generators of the quotient ideal \(\left(\<G_{ij}^{(k)},Q'\>:\<F_{ij}^{(k)},Q'\>\right)\).
            \begin{itemize}
            \item   
            If it contains at least one generator \(g_{ij}^{(k)}\not\equiv0(\mod\nfrak)\), 
            we compute a representative
            \[
            p_{ij}^{(k)}=\frac{F_{ij}^{(k)}(\xbf,\ybf,\wbf)}{G_{ij}^{(k)}(\xbf,\ybf,\wbf)}\equiv\frac{f_{ij}^{(k)}(\xbf,\ybf,\wbf)}{g_{ij}^{(k)}(\xbf,\ybf,\wbf)}(\mathrm{mod}~Q'),
            \]
            where \(g_{ij}^{(k)}\) is invertible on \(\widehat{\Ocal_{X,o}}\).
            \item If for any triple \((k,i,j)\), the generators of \(g_{ij}^{(k)}\) are reduced to $0$ modulo $\nfrak$, return \textbf{NOT Diagonalizable}. Otherwise continue the next step.
            \end{itemize}
        
            \item For each $k$, compute \(\Pi_k(\mod\nfrak)\in\Mat_n(\kappa(\nfrak))\).
                     
 \begin{itemize}
            \item Choose a maximal \(\kappa(\nfrak)\)-linearly independent family from the columns of \(\Pi_k (\mod{\nfrak})\) for \(k=1,\dots,d\).
            \item Lift these to the corresponding columns in \(\Pi_k\) to obtain a \(\Frac(\mathbb{C}[\xbf,\ybf,\wbf]/Q')\)-basis. 
            \item Apply the Gram–Schmidt orthogonalization process to this basis to produce a unitary matrix \(U\). (see Remark \ref{rmk:Gram-Schmidt} for a detailed explanation)  
        \end{itemize}
 Return \textbf{Diagonalizable}, together with \(U\) and \(D\coloneq U^* A U\).
       
        \end{enumerate}
    \end{algorithm}

    \begin{remark}
        Step~3 of Algorithm~\ref{alg:whether has proper divisor} fulfills two key objectives:
\begin{enumerate}
    \item \textbf{Membership verification:} To determine whether each entry \(p_{ij}^{(k)}\) of the spectral projection belongs to the ring \(\widehat{\Ocal_{X,o}}\). % This is verified via an ideal-quotient condition; failure for any entry implies the matrix is not diagonalizable over \(\widehat{\Ocal_{X,o}}\).
    
    \item \textbf{Quotient representation:} To compute, for valid entries, a representation \(p_{ij}^{(k)}=f_{ij}^{(k)}/g_{ij}^{(k)}\) where \(g_{ij}^{(k)}\) is invertible in \(\widehat{\Ocal_{X,o}}\). This explicit form is required for the residue-class computations in Step~4, which identify a maximal linearly independent family of columns.
\end{enumerate}

Notice that $p_{ij}^{(k)}$ actually belongs to $\Frac(\overline{\Ocal_{X,o}}^L)\subseteq\Frac(\Ocal_{X,o}^h)\subseteq\Frac(\widehat{\Ocal_{X,o}})$. According to Property \ref{lem:PropertesOfHenselization} \ref{lem:PropertesOfHenselization6}, if $p_{ij}^{(k)}$ lies in $\widehat{\Ocal_{X,o}}$, then $p_{ij}^{(k)}$ will belong to $(\overline{\Ocal_{X,o}}^L)_{\nfrak}$ for the maximal ideal $\nfrak=\overline{\Ocal_{X,o}}^L\cap\widehat{\mfrak_{X,o}}$ of $\overline{\Ocal_{X,o}}^L$.  Since the maximal ideals of $\overline{\Ocal_{X,o}}^L$ are Galois conjugates to each other, the maximal ideal $\nfrak$ can be arbitrary. 
\end{remark}

\begin{remark}\label{rmk:Gram-Schmidt}

Note that in Step 4 of Algorithm \ref{alg:whether has proper divisor}, one must determine how the involution $*$ on $\Cbf=\CC[\![\xbf]\!]$ acts on the eigenvalues $(\ybf\mod Q')$ of $A$. 
\begin{itemize}
    \item If $A$ is Hermitian, the eigenvalues are invariant under the involution, i.e., 
    \[ (\ybf\mod Q')^*=(\ybf\mod Q'). \]
    \item If $A$ is skew-Hermitian, the eigenvalues are skew-invariant, i.e., 
    \[ (\ybf\mod Q')^*=-(\ybf\mod Q'). \]
\end{itemize}
If $A$ is neither Hermitian nor skew-Hermitian, we utilize the decomposition $A=A_1+iA_2$, where $i=\sqrt{-1}$ and
\[ A_1=\frac{A+A^*}{2}, \quad A_2=\frac{A-A^*}{2i}. \]
Since $A$ is normal, the Hermitian matrices $A_1$ and $A_2$ commute. We can first apply Algorithm \ref{alg:whether has proper divisor} to unitarily diagonalize $A_1$. Because $A_1$ and $A_2$ commute, $A_2$ leaves the eigenspaces of $A_1$ invariant. We can therefore restrict $A_2$ to each characteristic subspace of $A_1$ (where it acts as a Hermitian operator) and recursively apply Algorithm \ref{alg:whether has proper divisor}. This process yields the simultaneous unitary diagonalization of $A_1$ and $A_2$, and consequently of $A$.

    %Then we compute the characteristic polynomials $\chi_{A_1}(t)$ and $\chi_{A_2}(t)$ of $A_1$ and $A_2$, respectively. Next, introduce the variables $\ubf=(u_1,\dots,u_n)$ and $\vbf=(v_1,\dots,v_n)$ to represent the roots of $\chi_{A_1}(t)$ and $\chi_{A_2}(t)$, respectively. Then $\ubf+i\vbf$ are exactly the roots of $\chi_A$, i.e., the eigenvalues of $A$. The polynomial $\chi_{A_1},\chi_{A_2}$ and $\chi_A$ give us three Vieta's ideals $J_1(\ubf),J_2(\vbf)$ and $J(\ubf+i\vbf)$ in variables $\ubf,\vbf$ and $\ubf+i\vbf$. Denote $\widetilde{J}=J_1(\ubf)+J_2(\vbf)+J(\ubf+i\vbf)\subset\Ocal_{X,o}[\ubf,\vbf]$. Then we need to compute an arbitrary minimal prime $Q$ of the universal decomposition algebra $\Ocal_{X,o}[\ubf,\vbf]/\widetilde{J}$ and calculate the integral closure $\Ocal_{X,o}[\ubf,\vbf,\wbf]/Q'$ of $\Ocal_{X,o}[\ubf,\vbf]/Q$ in its fraction field. Now 
\end{remark}

We present the following example to demonstrate Algorithm \ref{alg:whether has proper divisor}.

    \begin{example}[Continuing Example \ref{exp:rellich}]
        For the Hermitian matrix
        \[\tilde{A}=\begin{pmatrix}
                2x_1x_2 & x_1x_2+x_2 \\
                x_1x_2+x_2 & 2x_2
            \end{pmatrix},\]
        we have known that its minimal polynomial splits in \(\Cbf=\widehat{\Ocal_{X,o}}\). In fact, since $\tilde{A}$ is Hermitian, its minimal polynomial splits in the center of involution $\Rbf=(\widehat{\Ocal_{X,o}})^*$, implying that the eigenvalues of $\tilde{A}$ are all real power series. By Lagrange interpolation, we obtain
        \[
        \Pi_1=\begin{pmatrix}
                \frac{w_{0,1}x_1-w_{0,1}+3x_1^2+1}{4(x_1^2+1)} & \frac{w_{0,1}x_1+w_{0,1}+x_1^2+2x_1+1}{4(x_1^2+1)} \\
                \frac{w_{0,1}x_1+w_{0,1}+x_1^2+2x_1+1}{4(x_1^2+1)}  & \frac{-w_{0,1}x_1+w_{0,1}+x_1^2+3}{4(x_1^2+1)} 
            \end{pmatrix},\]
        \[\Pi_2=\begin{pmatrix}
                \frac{-w_{0,1}x_1+w_{0,1}+x_1^2+3}{4(x_1^2+1)} & \frac{-w_{0,1}x_1-w_{0,1}-x_1^2-2x_1-1}{4(x_1^2+1)} \\
                \frac{-w_{0,1}x_1-w_{0,1}-x_1^2-2x_1-1}{4(x_1^2+1)}  & \frac{w_{0,1}x_1-w_{0,1}+3x_1^2+1}{4(x_1^2+1)}
            \end{pmatrix}.
        \]
 Choosing the first column, respectively, of \(\Pi_1,\Pi_2\) and performing the Gram-\-Schmidt process, where \(w_{0,1}^*=w_{0,1}\) is real in \(\widehat{\Ocal_{X,o}}\), we obtain
        \[
        U=\begin{pmatrix}
                \frac{w_{0,1}x_1-w_{0,1}+3x_1^2+1}{\sqrt{\alpha}} & \frac{-w_{0,1}x_1+w_{0,1}+x_1^2+3}{\sqrt{\beta}} \\
                \frac{w_{0,1}x_1+w_{0,1}+x_1^2+2x_1+1}{\sqrt{\alpha}}  & \frac{-w_{0,1}x_1-w_{0,1}-x_1^2-2x_1-1}{\sqrt{\beta}}
            \end{pmatrix}
        \]
        where \(\alpha=4w_{0,1}x_{1}^{3}-4w_{0,1}x_{1}^{2}+4w_{0,1}x_{1}-4w_{0,1}+12x_{1}^{4}+16x_{1}^{2}+4,\ \beta=-4w_{0,1}x_{1}^{3}+4w_{0,1}x_{1}^{2}-4w_{0,1}x_{1}+4w_{0,1}+4x_{1}^{4}+16x_{1}^{2}+12\), and
        \[D=\begin{pmatrix}
                x_2\left(x_1+1+\sqrt{2x_1^2+2}\right) & 0 \\
                0  & x_2\left(x_1+1-\sqrt{2x_1^2+2}\right)
            \end{pmatrix}.\]
    \end{example}
    
    \section{Conclusions and Future Work}\hfill

We have established a rigorous algebraic framework and a constructive algorithm for the unitary diagonalization of normal matrices over formal power series rings.  %By generalizing the classical spectral theorem, our approach enables effective symbolic computation via the splitting algebra.
A primary bottleneck of the current algorithm is the calculation of the integral closure. Future work will focus on circumventing this expensive step, potentially by exploiting Galois symmetries or developing specialized lifting algorithms.

Additionally, since many applications prioritize the asymptotic behavior of diagonalized matrices, we plan to explore hybrid approaches. Combining our algebraic results with analytic representations -- such as defining power series using recurrence relations~\cite{van2019effective} -- offers a promising direction for efficient computation and analysis.

    \section*{Acknowledgments}
        We thank Prof. Li, Zijia for his help with the algorithm experiments. We thank Jiaqi Wang for his help in finding references. We would like to thank Joris van der Hoeven for {(sharing interesting fast arithmetics over the power series ring)}, and Èric schost for sharing with us an article on polynomial factorisation. Hao Liang would like to thank the instructions and discussions of Prof. Hu, Yongquan and Prof. Li, Shizhang on  ramification theory.  Zihao Dai, Hao Liang and  Lihong  Zhi are supported by the National Key R$\&$D Program of China 2023YFA1009401 and the Strategic Priority Research Program of Chinese Academy of Sciences under Grant XDA0480501.

\bibliographystyle{alpha}
\bibliography{Biblio,Article,Book}

\end{document}